\newtheorem{theorem}{Theorem}[section]
\newtheorem{lemma}[theorem]{Lemma}
\newtheorem{conjecture}[theorem]{Conjecture}
\newtheorem{definition}[theorem]{Definition}
\begin{document}

\title{\vspace{-0.85cm}The minimum number of nonnegative edges in hypergraphs}

\author{Hao Huang\thanks{Institute for Advanced Study, Princeton, NJ 08540 and DIMACS at Rutgers University. Email: {\tt huanghao@math.ias.edu}. Research supported in part by NSF grant DMS-1128155.}  \and Benny Sudakov
\thanks{Department of Mathematics, ETH, 8092 Zurich, Switzerland. Email: {\tt 
bsudakov@math.ucla.edu.} Research supported in part by SNSF grant 200021-149111 and by a USA-Israel BSF grant.}}
\date{}
\maketitle
\setcounter{page}{1}
\vspace{-2em}
\begin{abstract}
An $r$-unform $n$-vertex hypergraph $H$ is said to have the Manickam-Mikl\'os-Singhi (MMS) property if for every assignment of weights to its vertices with nonnegative sum, the number of edges whose total weight is nonnegative is at least the minimum degree of $H$.
In this paper we show that for $n>10r^3$, every $r$-uniform $n$-vertex hypergraph with equal codegrees has the MMS property, and the bound on $n$ is essentially tight up to a constant factor. This result has two immediate corollaries. First it shows that every set of $n>10k^3$ real numbers with nonnegative sum has at least $\binom{n-1}{k-1}$ nonnegative $k$-sums, verifying the Manickam-Mikl\'os-Singhi conjecture for this range.
More importantly, it implies the vector space Manickam-Mikl\'os-Singhi conjecture which states that for $n \ge 4k$ and any weighting on the $1$-dimensional subspaces of $\mathbb{F}_{q}^n$ with nonnegative sum, the number of nonnegative $k$-dimensional subspaces is at least ${n-1 \brack k-1}_q$. We also discuss two additional generalizations, which can be regarded as analogues of the Erd\H{o}s-Ko-Rado theorem on $k$-intersecting families.
\end{abstract}
\section{Introduction}
Given an $r$-uniform $n$-vertex hypergraph $H$ with minimum degree $\delta(H)$, suppose every vertex has a weight $w_i$ such that $w_1 + \cdots + w_n \ge 0$.
How many nonnegative edges must $H$ have? An edge of $H$ is nonnegative if the sum of the weights on its vertices is $\geq 0$.
Let $e^{+}(H)$ be the number of such edges. By assigning weight $n-1$ to the vertex with minimum degree, and $-1$ to the remaining vertices, it is easy to see that the number of nonnegative edges can be at most $\delta(H)$. It is a very natural question to determine when this easy
upper bound is tight, which leads us to the following definition.

\begin{definition}
A hypergraph $H$ with minimum degree $\delta(H)$ has the \textit{MMS property} if for every weighting $w: V(H) \rightarrow \mathbb{R}$ satisfying
$\sum_{x \in v(H)} w(x) \ge 0$, the number of nonnegative edges is at least $\delta(H)$.
\end{definition}

The question, which hypergraphs have MMS property, was motivated by two old conjectures of Manickam, Mikl\'os, and Singhi \cite{manickam-miklos, manickam-singhi}, both of which were raised in their study of so-called first distribution invariant of certain association schemes.
\begin{conjecture}\label{conj_mms}
Suppose $n \ge 4k$, and we have $n$ real numbers $w_1, \cdots, w_n$ such that $w_1 + \cdots + w_n \ge 0$, then there are at least
$\binom{n-1}{k-1}$ subsets $A$ of size $k$ satisfying $\sum_{w_i \in A} w_i \ge 0$.
\end{conjecture}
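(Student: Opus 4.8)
The plan is to begin with the standard normalizations. Since decreasing the largest weight until $\sum_i w_i = 0$ can only destroy nonnegative $k$-subsets, it suffices to prove the bound when $\sum_i w_i = 0$. Moreover the map $w \mapsto e^{+}(w)$ is upper semicontinuous --- a $k$-subset of sum exactly $0$ may turn negative, but one of positive sum stays nonnegative under small perturbations --- so approaching an arbitrary $w$ in the hyperplane $\{\sum_i w_i = 0\}$ by generic points of that hyperplane reduces us to $w_1 > w_2 > \cdots > w_n$ with no $k$-subset of sum exactly $0$. Finally, if $w_n \ge 0$ then all $\binom nk \ge \binom{n-1}{k-1}$ subsets are nonnegative, so we may assume $w_1 > 0 > w_n$.

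If $k \mid n$ the claim is immediate: average over all $\frac{n!}{(k!)^{n/k}(n/k)!}$ partitions of $[n]$ into blocks of size $k$. In each partition the block sums add to $\sum_i w_i = 0$, so at least one block is nonnegative; since each $k$-subset lies in equally many partitions, double counting gives $e^{+}(w) \ge \frac kn\binom nk = \binom{n-1}{k-1}$. The entire difficulty is to imitate this when $k \nmid n$: a near-partition into blocks of size $k$ with one leftover block of size $n - k\lfloor n/k\rfloor$ need not contain a nonnegative block of size $k$, because the sign of the leftover block is uncontrolled.

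For general $n \ge 4k$ I would split on the shape of $w$. In the \emph{balanced} regime, where no bounded collection of coordinates carries a constant fraction of $\sum_i|w_i|$, I would combine the near-partition averaging with a concentration estimate: a uniformly random block of size $k$ has mean-zero sum that is tightly concentrated about $0$, so a positive fraction of the blocks of a random near-partition are nonnegative, and double counting yields $e^{+}(w) = \Omega\!\big(\binom nk\big)$, far more than required. In the \emph{unbalanced} regime a bounded set $T$ of heavy coordinates carries almost all the positive weight; here I would try to reduce, via a compression/exchange step that does not increase $e^{+}(w)$, to the case in which $w$ takes only two values $a > 0 > -b$ with multiplicities $p$ and $n-p$ (so $pa = (n-p)b$). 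The problem then becomes the purely combinatorial inequality
\[
\sum_{\,j\,\ge\, kp/n}\binom pj\binom{n-p}{k-j}\ \ge\ \binom{n-1}{k-1}\qquad (1\le p\le n-1),
\]
which has equality only at $p=1$, reproducing the tight configuration $w_1 = k-1$, $w_2 = \cdots = w_n = -\tfrac{k-1}{n-1}$. This inequality I would establish for $n\ge 4k$ by comparing consecutive terms and invoking routine binomial estimates; the constant $4$ is exactly what makes the $p = 2,3,\dots$ cases go through near $n = 4k$.

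The main obstacle, I expect, is the unbalanced and intermediate range together with the sharp threshold. The reduction to two-valued weights needs an exchange argument that genuinely controls $e^{+}$ rather than merely preserving $\sum_i w_i$, and since $e^{+}$ is only piecewise constant this is not a routine convexity argument; and the resulting binomial inequality is delicate for small $p$ when $n$ is close to $4k$ --- indeed the threshold $4k$ cannot be lowered in general, so understanding this boundary is part of the difficulty. Getting the $\Omega(\binom nk)$ bound in the balanced case to meet the combinatorial bound in the unbalanced case uniformly over all $n \ge 4k$, and not just for $n$ much larger than $k$, is where the real work of the conjecture lies.
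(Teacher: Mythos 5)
First, note that the paper does not in fact prove this statement in the stated range: Conjecture~\ref{conj_mms} with $n\ge 4k$ remains open. What the paper establishes, as a corollary of Theorem~\ref{main_theorem} applied to the complete $k$-uniform hypergraph (all codegrees equal to $\binom{n-2}{k-2}$), is only the weaker range $n>10k^3$. That argument proceeds by a trichotomy on the $2r$-th largest weight $w_{2r}$ (small, large, intermediate), in each case bounding the number of negative edges through a few heavy vertices via double counting with the common codegree $\lambda$. It uses neither near-partitions nor any compression to two-valued weights, so your route is genuinely different from the paper's, and in any case the paper's route only targets a polynomial threshold.

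Second, the gaps in your plan. The $k\mid n$ averaging over full partitions is correct and classical; it is exactly the Manickam--Singhi divisibility argument the paper cites. But both halves of your dichotomy are missing their main step. In the balanced regime, ``mean-zero block sum tightly concentrated around $0$'' does not by itself yield a positive fraction of nonnegative blocks: a mean-zero distribution can be strongly skewed (many slightly negative entries, a handful of large positives), making the block-sum median negative and the mass above $0$ vanishingly small, and your hypothesis that no bounded set carries a constant fraction of $\sum_i |w_i|$ does not exclude this. One needs a genuine anti-concentration or small-deviations input, which is precisely the route of Alon--Huang--Sudakov \cite{alon-huang-sudakov}, and that route currently stops at $n>33k^2$, nowhere near $4k$. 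In the unbalanced regime, the reduction to two values $a>0>-b$ is the whole difficulty: there is no known compression or exchange operation on $w$ that is monotone for $e^{+}$, and $e^{+}$ is neither convex nor Schur-convex along the natural exchange paths, so the Vandermonde-tail inequality you write down is a statement about a special subfamily rather than a reduction of the general problem. You have located the right obstacles, but the plan does not surmount either one, and the threshold $n\ge 4k$ remains out of reach.
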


The second conjecture is an analogue of Conjecture \ref{conj_mms} for vector spaces. Let $V$ be a $n$-dimensional vector space over a finite field $\mathbb{F}_q$. Denote by ${V \brack k}$ the family of $k$-dimensional subspaces of $V$, and the $q$-Gaussian binomial coefficient ${n \brack k}_q$ is defined as $\prod_{0 \le i < k} \frac{q^{n-i}-1}{q^{k-i}-1}.$

\begin{conjecture} \label{conj_mms_vector}
Suppose $n \ge 4k$, and let $w: {V \brack 1} \rightarrow \mathbb{R}$ be a weighting on the one-dimensional subspaces of $V$ such that $\sum_{v \in {V \brack 1}} w(v)=0$, then the number of $k$-dimensional subspaces $S$ with
$\sum_{v \in {V \brack 1}, v \subset S} w(v) \ge 0$ is at least ${n-1 \brack k-1}_q.$
\end{conjecture}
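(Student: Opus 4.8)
The plan is to derive Conjecture~\ref{conj_mms_vector} from the main theorem of the paper — that every $r$-uniform hypergraph with all pairwise codegrees equal and more than $10r^3$ vertices has the MMS property — by encoding the vector-space statement as the MMS question for a single, highly symmetric hypergraph. Fix an $n$-dimensional space $V=\mathbb{F}_q^n$, and let $H=H(n,k,q)$ be the hypergraph on vertex set ${V \brack 1}$ whose edges are exactly the sets ${S \brack 1}$ of one-dimensional subspaces contained in a $k$-dimensional subspace $S\in{V \brack k}$. Then $H$ is $r$-uniform with $r={k \brack 1}_q=\frac{q^k-1}{q-1}$, it has $N={n \brack 1}_q$ vertices, and each vertex lies in exactly ${n-1 \brack k-1}_q$ edges, so $\delta(H)={n-1 \brack k-1}_q$. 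Moreover a weighting $w\colon {V \brack 1}\to\mathbb{R}$ with $\sum_v w(v)\ge 0$ is precisely a nonnegative-sum weighting of $V(H)$, under which a $k$-subspace $S$ is nonnegative in the sense of Conjecture~\ref{conj_mms_vector} exactly when the edge ${S \brack 1}$ has nonnegative total weight. Hence the number of nonnegative $k$-subspaces equals $e^{+}(H)$.

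The single structural fact needed to invoke the main theorem is that $H$ has equal codegrees. Indeed, two distinct one-dimensional subspaces $u\neq v$ span a unique two-dimensional subspace $W=u+v$, and a $k$-dimensional subspace $S$ contains both $u$ and $v$ precisely when $W\subseteq S$; the number of such $S$ equals ${n-2 \brack k-2}_q$, which is independent of the pair $\{u,v\}$. Therefore, granting the main theorem and the hypothesis $N>10r^3$, we obtain $e^{+}(H)\ge\delta(H)={n-1 \brack k-1}_q$ for every weighting with nonnegative sum — in particular for those with sum $0$ — which is exactly the assertion of Conjecture~\ref{conj_mms_vector}.

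It remains to check the numerical inequality ${n \brack 1}_q>10\left({k \brack 1}_q\right)^{3}$ under the hypothesis $n\ge 4k$. The left-hand side increases with $n$, so it suffices to treat $n=4k$; writing $t=q^k$ and using $q^{4k}-1=(q^k-1)(q^{3k}+q^{2k}+q^k+1)$, the inequality simplifies to $(t^3+t^2+t+1)(q-1)^2>10(t-1)^2$. A short case analysis shows this holds for all prime powers $q$ and all $k\ge 1$, with the sole exception $(q,k,n)=(2,2,8)$: there $H$ is the point-line geometry of $PG(7,2)$, with $N=255$ just short of $10r^3=270$. This single instance has to be settled separately, for example by a direct argument for this particular linear space, or via a mildly sharper form of the main theorem for vertex-transitive hypergraphs.

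The translation above is essentially bookkeeping, so within this deduction the only work is the two small points just made: verifying equal codegrees (immediate) and disposing of the lone exceptional instance. The real difficulty, and the true obstacle, lies one level down — in proving the main hypergraph theorem itself, i.e.\ that equal codegrees together with $N>10r^3$ already force the MMS property; I would expect that, rather than anything in the reduction, to be where all the effort goes.
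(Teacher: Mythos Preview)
Your proposal is correct and follows exactly the same route as the paper: build the hypergraph on ${V\brack 1}$ with edges the $k$-subspaces, observe the common codegree ${n-2\brack k-2}_q$, apply Theorem~\ref{main_theorem}, and verify ${n\brack 1}_q>10{k\brack 1}_q^{3}$ when $n\ge 4k$. The one point where the paper is more concrete is the lone exception $(q,k,n)=(2,2,8)$: instead of leaving it to a direct argument or a sharpened hypergraph bound, the paper dispatches it by citing the result of Manickam and Singhi that the conjecture holds whenever $k\mid n$ (and $2\mid 8$).
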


Conjecture \ref{conj_mms} can be regarded as an analogue of the famous Erd\H os-Ko-Rado theorem \cite{erdos-ko-rado}. The latter says that for $n \geq 2k$, a family of $k$-subsets of $[n]$ with the property that every two subsets have a nonempty intersection has size at most $\binom{n-1}{k-1}$. In both problems, the extremal examples correspond to a star, which consists of subsets containing a particular element in $[n]$. The Manickam-Mikl\'os-Singhi conjecture has been open for more than two decades, and various partial results were proven. There are several works verifying the conjecture for small $k$ \cite{hartke-stolee, manickam, marino-chiaselotti}. But most of the research focus on proving the conjecture for every $n$ greater than a given function $f(k)$.
Manickam and Mikl\'{o}s \cite{manickam-miklos} verified the conjecture for $n \ge (k-1)(k^k+k^2)+k$.
Later Tyomkyn \cite{tyomkyn} improved this bound to $n \ge e^{ck \log \log k}$. Alon, Huang, and Sudakov \cite{alon-huang-sudakov} obtained the
first polynomial bound $n>33k^2$. Later, Frankl \cite{frankl} gave a shorter proof for a cubic range $n\ge \frac{3}{2}k^3$.
A linear bound $n \ge 10^{46} k$ was obtained by Pokrovskiy \cite{pokrovskiy}. He reduced the conjecture to finding a $k$-uniform
hypergraph on $n$ vertices satisfying the MMS property (similar techniques were also employed earlier in \cite{manickam-miklos}).
The second conjecture, Conjecture \ref{conj_mms_vector}, was very recently proved by Chowdhury, Sarkis, and Shahriari \cite{chowdhury_vector}
simultaneously with our work. They also proved a quadratic bound $n \geq 8k^2$ for sets.

We observe that both conjectures can be reduced to proving that certain hypergraph has the MMS property. For the first conjecture,
simply let the hypergraph $H_1$ be the complete $k$-uniform hypergraph on $n$ vertices. For the second conjecture,
one can take the ${k \brack 1}_q$-uniform hypergraph $H_2$ with vertex set ${V \brack 1}$ and let edges correspond to $k$-dimensional subspaces.
Both hypergraphs are regular, and moreover the codegree of every pair of vertices is the same. It is tempting to conjecture that all such hypergraphs
satisfy the MMS property. The requirement that all the codegrees are equal may not be dropped. For instance, the tight Hamiltonian cycle
(the edges are consecutive $r$-tuples modulo $n$) when $n \equiv 1 \pmod r$ is not MMS. This can be seen by choosing the weights $w(xr+1)=n$ for $x=0, \cdots, n/r$
and all the other weights to be $-\frac{n+r}{r-1}$, which results in only $r-1$ nonnegative edges, as opposed to the fact that the degree is $r$.
Our main theorem indeed confirms that equal codegrees imply the MMS property.

\begin{theorem} \label{main_theorem}
Let $H$ be an $r$-uniform $n$-vertex hypergraph with $n>10 r^3$ and all the codegrees equal to $\lambda$. Then for every
weighting $w: V(H) \rightarrow \mathbb{R}$ with $\sum_v w_v \ge 0$, we have $e^{+}(H) \ge \delta(H)$. Moreover in the case of equality, all nonnegative edges
form a star, i.e., contain a fixed vertex of $H$.
\end{theorem}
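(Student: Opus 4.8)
The plan is to recast the inequality as a statement about the edge-sum of a random edge, dispose of a few degenerate weightings directly, and then balance a second-moment estimate against a Bonferroni count.

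We begin with routine reductions. Double counting the pairs through a fixed vertex shows the codegree hypothesis forces $H$ to be $d$-regular with $d=\delta(H)=(n-1)\lambda/(r-1)$ and $m:=|E(H)|=nd/r$. Lowering a weight can never create a nonnegative edge, so we may assume $\sum_v w_v=0$ (this also preserves the equality case); rescaling, assume $w\not\equiv 0$, sort $w_1\ge\cdots\ge w_n$, so $w_1>0>w_n$, and set $\sigma:=w_{n-r+2}+\cdots+w_n$. Two observations settle the degenerate regimes. If $U_-:=\{v:w_v<0\}$ has $|U_-|\le n/r-1$, then at least $m-|U_-|d=d(n/r-|U_-|)\ge d$ edges miss $U_-$, and all of these are nonnegative; so we may assume $|U_-|\ge n/r$. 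If $w_1+\sigma\ge 0$, then every edge through the heaviest vertex $v_1$ has sum $\ge w_1+\sigma\ge 0$, giving $e^+(H)\ge d$ with equality forcing the nonnegative edges to be exactly the star at $v_1$; and if $w_2+\sigma\ge 0$ too, the stars at $v_1$ and $v_2$ are both entirely nonnegative, so by Bonferroni $e^+(H)\ge 2d-\lambda>d$. What remains is the main case: $|U_-|\ge n/r$ and $w_1+\sigma<0$, the latter meaning $w_1<|w_n|+\cdots+|w_{n-r+2}|\le(r-1)|w_n|$, i.e.\ no single vertex dominates the positive mass.

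For the main case I would use second moments. Let $e$ be a uniformly random edge and $X:=\sum_{v\in e}w_v$. Using $d$-regularity and that every pair lies in exactly $\lambda$ edges, $\mathbb{E}X=\tfrac rn\sum_v w_v=0$ and $\mathbb{E}X^2=\tfrac{r(n-r)}{n(n-1)}W_2$ with $W_2:=\sum_v w_v^2$. Put $a:=\max_e\sum_{v\in e}w_v$ and $b:=-\min_e\sum_{v\in e}w_v$, both positive, so $-b\le X\le a$. Bounding $X^2\le aX^{+}+bX^{-}$ and using $\mathbb{E}X^{+}=\mathbb{E}X^{-}\le a\Pr[X\ge 0]$ yields $\Pr[X\ge 0]\ge \mathbb{E}X^2/(a(a+b))$; since $e^+(H)\ge d$ is precisely $\Pr[X\ge 0]\ge r/n$, it suffices to prove the structural estimate $a(a+b)\le\tfrac{n-r}{n-1}W_2$. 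It is tight: for the star weighting, $a=n-r$, $b=r$, $W_2=n(n-1)$, and every inequality above is an equality.

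The trouble is that the obvious bound $a\le w_1+\cdots+w_r\le\sqrt r\,\sqrt{W_2}$ (and similarly for $b$) loses a factor of order $r$, and this loss is genuine: one can build main-case weightings with a bounded number of positive vertices of comparable weight and the negative mass spread thinly, for which $a(a+b)\asymp rW_2$. For those weightings, however, essentially every nonnegative edge is an edge meeting the set $T$ of heavy positive vertices, of which there are at least $|T|d-\binom{|T|}{2}\lambda$; a negative edge meeting $T$ must contain many light vertices, and the codegree condition limits the number of edges through $T$ carrying many prescribed vertices, so $e^+(H)\ge|T|d-\binom{|T|}{2}\lambda-(\text{few exceptions})>d$ whenever $2\le|T|\le 2(n-1)/(r-1)$. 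Thus the plan for the main case is to fix a threshold $\theta$ (of order $\sqrt{W_2r/n}$, so that $|T|\le n/r$) and split on whether $H$ has two vertices of weight exceeding $\theta$: if so, run the heavy-set count; if not, then after discarding at most one vertex the remaining weights are flat enough — using $w_1<(r-1)|w_n|$ from the main-case hypothesis to control the discarded vertex — that $a(a+b)\le\tfrac{n-r}{n-1}W_2$ and the second-moment bound closes. I expect the real obstacle to be exactly this calibration: choosing $\theta$ so the two regimes overlap, and estimating the negative edges meeting $T$ finely enough to survive the worst configurations (where the two bounds meet only up to a factor $1+O(1/r)$), and it is here that the hypothesis $n>10r^3$ is spent; the cubic dependence on $r$ is necessary, witnessed by an explicit design-plus-weighting example. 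In every subcase of the main case the inequality is strict, so equality in the theorem forces us into the dominant-vertex case above, where the $d$ nonnegative edges form a star.
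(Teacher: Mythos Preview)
Your reductions and the disposal of the degenerate regimes are correct and clean, and the second-moment framework is set up correctly. But the hybrid plan for the main case has a gap that is structural, not just a matter of calibration.

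Take $H$ to be the complete $r$-graph (so all codegrees equal $\binom{n-2}{r-2}$) and, for $r\ge 3$, the weighting $w_1=M$, $w_i=\epsilon$ for $2\le i\le n/2$, $w_i=-\epsilon$ for $n/2< i<n$, $w_n=-M$, with $\epsilon>0$ tiny. This sits squarely in your main case: $|U_-|=n/2\ge n/r$, $w_1+\sigma=-(r-2)\epsilon<0$, and indeed $w_1=M<(r-1)M=(r-1)|w_n|$. Only $v_1$ has weight exceeding any threshold of the order you propose, so your dichotomy routes this instance to the second-moment branch. But here $a=b=M+(r-1)\epsilon$ and $W_2=2M^2+(n-2)\epsilon^2$, so as $\epsilon\to 0$ one gets $a(a+b)\to 2M^2$ while $\tfrac{n-r}{n-1}W_2\to \tfrac{n-r}{n-1}\cdot 2M^2<2M^2$: the inequality you need fails. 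Discarding $v_1$ does not help, because the obstruction is the heavy \emph{negative} vertex $v_n$, which a threshold on $w_v$ (rather than $|w_v|$) ignores; and if you enlarge $T$ to vertices with large $|w_v|$, then $v_n\in T$ but essentially every edge through $v_n$ is negative, so your Bonferroni count $|T|d-\binom{|T|}{2}\lambda-(\text{exceptions})$ collapses to about $d-\lambda<d$. The single-heavy-positive/single-heavy-negative configuration is invisible to both halves of your hybrid.

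The paper's argument is quite different and supplies exactly the idea your plan is missing. It splits instead on the size of the $2r$-th largest weight $w_{2r}$ (small, large, or intermediate) and in each regime bounds the number of negative edges through specific vertices by direct double counting against the codegree hypothesis; no variance enters. For the configuration above (the regime $w_{2r}$ small) the key manoeuvre is: first show at least $(1-O(1/r))d$ edges through $v_1$ are nonnegative; then, if some edge $e\ni v_1$ is negative, pick the largest positive weight $w_u$ \emph{outside} $e$ and prove, again via the codegree condition, that at least $\Omega(d/r)$ edges through $u$ disjoint from $e$ are nonnegative. These two disjoint families together exceed $d$. It is this ``step outside a bad edge'' device, not a global moment estimate, that carries the main case.
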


The lower bound on $n$ in this theorem is tight up to a constant factor.  Our result immediately implies two corollaries. First it verifies Conjecture 
\ref{conj_mms} for a weaker range $n \ge \Omega(k^3)$. Moreover it also provides a proof of Conjecture \ref{conj_mms_vector}.

As mentioned earlier, there are some subtle connections between Manickam-Mikl\'os-Singhi conjecture and the Erd\H os-Ko-Rado theorem on intersecting families. 
In \cite{erdos-ko-rado}, Erd\H os, Ko and Rado also initiated the study of $k$-intersecting families (any two subsets have at least $k$ common elements). They 
show that for $k<t$, there exists an integer $n_0(k, t)$ such that for all $n \ge n_0(k, t)$ the largest $k$-intersecting family of $t$-sets are the 
$k$-stars, which are of size $\binom{n-k}{t-k}$. This result is equivalent to saying that in the $\binom{t}{k}$-uniform hypergraph $H$ whose vertices are 
$k$-subsets of $[n]$ and edges correspond to $t$-subsets of $[n]$, the maximum intersecting sub-hypergraph has size $\binom{n-k}{t-k}$. The following theorem 
says that for large $n$, this hypergraph has the MMS property. Note that this is not implied by Theorem \ref{main_theorem}, because the codegree of two 
vertices (as $k$-subsets) depends on the size of their intersection.

\begin{theorem}\label{theorem_k-tuple}
Let $k, t$ be positive integers with $t>k$, $n>Ct^{3k+3}$ for sufficiently large $C$ and let
$\{w_{X}\}_{X \in \binom{[n]}{k}}$ be a weight assignment with $\sum_{X \in \binom{ [n]}{k}} w_{X} \ge 0$. Then there are always at least $\binom{n-k}{t-k}$ subsets $T$ of size $t$ such that $\sum_{X \subset T} w_X \ge 0$.
\end{theorem}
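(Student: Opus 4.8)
The plan is to adapt the method behind Theorem~\ref{main_theorem}: normalise the weighting, dichotomise according to whether all weights are small or some weight is large, handle the small case by anti-concentration, and handle the large case by a star argument together with a compensation count. Throughout write $W(T)=\sum_{X\in\binom{T}{k}}w_X$ for a $t$-set $T$, so that a nonnegative edge of the hypergraph $H$ in question (vertex set $\binom{[n]}{k}$, edge set $\binom{[n]}{t}$) is exactly a $t$-set with $W(T)\ge 0$; here $H$ is $\binom{t}{k}$-regular with $\delta(H)=\binom{n-k}{t-k}$, and $\sum_{T}W(T)=\binom{n-k}{t-k}\sum_X w_X$. The first step is the standard reduction to $\sum_X w_X=0$: subtracting the nonnegative average $\binom{n}{k}^{-1}\sum_X w_X$ from every $w_X$ decreases every $W(T)$, hence only decreases the number of nonnegative $t$-sets, so it suffices to prove the bound in the zero-sum case. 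I would also record the structural point that, although $H$ is not codegree-regular, two $k$-sets meeting in $j$ points have codegree $\binom{n-2k+j}{t-2k+j}$, and for $n>Ct^{3k+3}$ all of these agree up to a factor $1+O(t^{2}/n)$. This tiny slack is what the argument will spend, and it is why the exponent here is $3k+3$ rather than the $3k$ that would be forced by $\binom{t}{k}$-uniformity alone.

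Fix a threshold $\Theta$, polynomial in $n$ and to be tuned, and distinguish two cases. In the \textbf{small-weight case} $\max_X|w_X|\le\Theta$, I would show that for a uniformly random $t$-set $T$ the probability $\Pr[W(T)\ge 0]$ is bounded below by a quantity that still dwarfs the required fraction $\binom{n-k}{t-k}/\binom{n}{t}=(1+o(1))(t/n)^{k}$. The tool is a vertex-exposure martingale: reveal the $t$ elements of $T$ one at a time and use an Azuma-type bound to see that $W(T)$ does not stray far from its mean $0$, combined with a Littlewood--Offord / anti-concentration estimate guaranteeing that $W(T)$ is genuinely spread out. The latter can fail only when the weight is essentially carried by a bounded number of $k$-sets of large magnitude, and that situation is pushed into the large-weight case; otherwise $\sum_T W(T)=0$ together with the spread forces far more than $(t/n)^k\binom{n}{t}=\delta(H)$ of the $t$-sets to be nonnegative.

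In the \textbf{large-weight case} some $k$-set $X_0$ has $|w_{X_0}|$ very large (or, by the previous remark, the weight is concentrated on a few $k$-sets of large magnitude). If the dominant weight is positive, then essentially all $\binom{n-k}{t-k}$ $t$-sets $T\supseteq X_0$ satisfy $W(T)\ge 0$, since $w_{X_0}$ overwhelms the other $\binom{t}{k}-1$ terms except on the few $t$-sets that also contain a $k$-set of comparable negative weight, and a short count makes up the small remaining deficit. The delicate case is a dominant \emph{negative} weight $w_{X_0}$. Here only a $(1+o(1))(t/n)^{k}$-fraction of $t$-sets contain $X_0$, while among the remaining $t$-sets the total weight equals $-\binom{n-k}{t-k}$ times the (very negative) average of $W$ over $\{T\supseteq X_0\}$, hence is large and positive; provided this positive mass is not itself concentrated on too few $t$-sets — which one should be able to rule out via the same near-regularity and second-moment estimates — enough of those $t$-sets are nonnegative. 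An alternative route for this case is a peeling induction on $n$ (with the base case of the range handled directly): delete a vertex $v$ with $\sum_{X\ni v}w_X\le 0$, invoke the statement on the remaining $n-1$ vertices to obtain $\binom{n-1-k}{t-k}$ nonnegative $t$-sets avoiding $v$, and recover the missing $\binom{n-1-k}{t-k-1}$ among the $t$-sets through $v$ by analysing the induced weighting on the link of $v$ (weights on $(k-1)$-subsets of $[n]\setminus v$). Either way, controlling the near-regularity error terms and the compensation bookkeeping in this negative case — and in particular verifying that the extra factor $t^{3}$ of room genuinely absorbs them — is the main obstacle.
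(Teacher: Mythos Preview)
Your plan diverges substantially from the paper's argument and has a real gap in the small-weight case. Anti-concentration (Littlewood--Offord or otherwise) bounds point masses from above; it does not, by itself or combined with an Azuma martingale, yield a \emph{lower} bound on $\Pr[W(T)\ge 0]$. With $\mathbb{E}[W(T)]=0$ one can have configurations where almost all $t$-sets have $W(T)$ slightly negative and a tiny fraction have $W(T)$ large and positive; your tools do not rule this out, and the phrase ``genuinely spread out'' is doing unearned work. The peeling induction you offer for the dominant-negative case is also broken as written: passing to the link of a vertex $v$ replaces weights on $k$-sets by weights on $(k-1)$-sets, so you are not recursing in the same statement, and no base case is provided.

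The paper's proof is entirely deterministic double counting, with no martingales or anti-concentration. After normalising so that the maximum weight is $w_{[k]}=1$, it splits not on $\max_X|w_X|$ but on whether the $t^k$-th largest weight $w_P$ exceeds $1/t^{2k}$. The engine is Lemma~\ref{very_negative}: if for some $k$-set $L$ the double sum $\sum_{L\subset Y}\sum_{L\neq X\subset Y}w_X\le -\tfrac{1}{13t^{2k}}\binom{n-k}{t-k}$ and $\sum_{X\neq L}w_X\ge -1$, then there are already more than $\binom{n-k}{t-k}$ nonnegative $t$-sets. This lemma is precisely where the non-uniform codegrees are handled: the sum is rewritten as $\sum_{j}b_j\sum_{|X\cap L|=j}w_X$, each layer $\sum_{|X\cap L|=j}w_X$ is compared with the quantities $\sum_{|Z\cap L|=y}\sum_{X\subset Z}w_X$ via coefficients $\beta_{y,j}$ forming a lower-triangular system, and a short linear-algebra fact (Lemma~\ref{lemma_gauss}) about such systems extracts the bound. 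With this tool in hand, three further lemmas dispatch the cases $w_P>1/t^{2k}$ (each of the top $t^k$ vertices lies in many nonnegative edges, else invoke Lemma~\ref{very_negative}; then union-bound); $w_P\le 1/t^{2k}$ with few nonnegative edges through $[k]$ (the double sum through $[k]$ is then very negative, invoke Lemma~\ref{very_negative}); and many nonnegative edges through $[k]$ (pick a negative edge $f\supset[k]$, reweight outside $f$, and locate a second centre $Q\not\subset f$). Your proposal has nothing playing the role of Lemma~\ref{very_negative}, and that is exactly the device that absorbs the codegree irregularity you flag as ``the main obstacle'' at the end.
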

This result can be regarded as an analogue of the $k$-intersecting version of the Erd\H os-Ko-Rado theorem. Moreover,
the Manickam-Mikl\'os-Singhi conjecture is a special case of this theorem corresponding to $k=1$ and $t=r$. Using a similar proof
one can also obtain a generalization of the vector space version of Manickam-Mikl\'os-Singhi conjecture.

\begin{theorem}\label{theorem_k-tuple_vector}
Let $k, t$ be positive integers with $t>k$, $n>Ck(t-k)$ for sufficiently large $C$, $V$ be the $n$-dimensional vector space over $\mathbb{F}_q$ and
let $\{w_{X}\}_{X \in {V \brack k}}$ be a weight assignment with $\sum_{X \in {V \brack k}} w_{X} \ge 0$. Then there are always at least ${n-k \brack t-k}_q$ $t$-dimensional subspaces $T$ such that $\sum_{X \in {V \brack k}, X \subset T} w_X \ge 0$.
\end{theorem}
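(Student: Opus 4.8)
The plan is to adapt the proof of Theorem \ref{theorem_k-tuple} to the Grassmann setting. Write $W(T)=\sum_{X\in{V\brack k},\,X\subseteq T}w_X$, so that the goal is $|\{T\in{V\brack t}:W(T)\ge 0\}|\ge{n-k\brack t-k}_q$. Before the main argument I would make two reductions. First, one may assume $\sum_X w_X=0$: if the sum is strictly positive, subtracting the constant $\bigl(\sum_X w_X\bigr)/{n\brack k}_q$ from every weight makes the sum zero and lowers every $W(T)$, so the set of nonnegative $t$-subspaces only shrinks, and it suffices to treat the sum-zero case. Second, for any $k<k'<t$ the problem is unchanged if one replaces $\{w_X\}$ by the pushed-up weights $w'_Y=\sum_{X\in{V\brack k},\,X\subset Y}w_X$ on $Y\in{V\brack k'}$, because $\sum_Y w'_Y={n-k\brack k'-k}_q\sum_X w_X$ and $\sum_{Y\subseteq T}w'_Y={t-k\brack k'-k}_q\,W(T)$, so both the hypothesis and the collection of nonnegative $t$-subspaces are preserved.

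Next I would try to symmetrize over $GL_n(\mathbb F_q)$ and reduce to an MMS hypergraph on the vertex set ${V\brack k}$ whose edges are the sets $E_T=\{X:X\subseteq T\}$ for $T$ ranging over a subfamily $\mathcal B\subseteq{V\brack t}$; if such a hypergraph were regular and had the MMS property, a double count together with the flag identity ${n\brack k}_q{n-k\brack t-k}_q={n\brack t}_q{t\brack k}_q$ would give exactly the bound ${n-k\brack t-k}_q$. The catch, which is why Theorem \ref{main_theorem} cannot be quoted directly (the same obstruction noted in the paper for the set version), is that this hypergraph cannot have all codegrees equal once $k\ge 2$: equal codegrees would require $|\{T\in\mathcal B:Y\subseteq T\}|$ to be one and the same number for subspaces $Y$ of \emph{every} dimension from $k+1$ to $2k$, and comparing $\{T\supseteq Y\}\supseteq\{T\supseteq Y'\}$ for a nested pair $Y\subset Y'$ forces $\mathcal B=\emptyset$. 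So instead I would reprove the relevant instance of the MMS phenomenon from scratch, imitating the method behind Theorem \ref{main_theorem}: pass to the Grassmann association scheme $J_q(n,k)$, decompose $w$ (which lies orthogonal to the constants) along the eigenspaces as $w=w_1+\dots+w_k$, and exploit that the up-map (the inclusion matrix from ${V\brack k}$ to ${V\brack t}$) commutes with the $GL_n$-action and hence acts on the $i$-th eigenspace by an explicit scalar. An eigenvalue/interlacing estimate then bounds the number of $T$ with $W(T)<0$, and the hypothesis $n>Ck(t-k)$ should be enough to make the top eigenvalue dominate all the others, which pins the extremal configuration down to a $k$-star.

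The hard part will be the quantitative heart of this last step. Because the $q$-Gaussian coefficients ${n-j\brack t-j}_q$ that appear as codegrees vary with $j$, the off-diagonal eigenvalue contributions must be summed with care and then shown to be swamped once $n$ exceeds a constant times $k(t-k)$ — the natural scale, since ${t\brack k}_q=q^{(1+o(1))k(t-k)}$; this is the place where a real computation is needed. The other delicate point is the equality case: as in the Erd\H os--Ko--Rado $t$-intersecting theorems, concluding that equality forces a $k$-star requires a stability strengthening of the eigenvalue bound. By contrast, the two preliminary reductions, the symmetrization, and the flag count are routine bookkeeping; essentially all the content sits in making the Grassmann-scheme estimate valid in the stated range with the stated equality characterization.
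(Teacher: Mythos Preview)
Your preliminary reductions are fine and your diagnosis that the equal-codegree hypothesis of Theorem~\ref{main_theorem} fails here for $k\ge 2$ is correct. But the heart of your proposal rests on two things that do not match the paper and, more importantly, do not obviously work.

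First, a misconception: you write that you will ``imitat[e] the method behind Theorem~\ref{main_theorem}: pass to the Grassmann association scheme \ldots\ decompose $w$ along the eigenspaces \ldots\ an eigenvalue/interlacing estimate then bounds the number of $T$ with $W(T)<0$.'' Theorem~\ref{main_theorem} is \emph{not} proved spectrally in this paper. Its proof (Lemmas~\ref{lemma_case1}--\ref{lemma_case3}) is a bare-hands case split on the value of $w_{2r}$, using only double counting of $\sum_{e\ni i}\sum_{j\in e\setminus\{i\}}w_j$ and the single codegree parameter $\lambda$. Likewise, the paper's proof of Theorem~\ref{theorem_k-tuple_vector} (Lemmas~\ref{very_negative_vector}--\ref{lemma_last_vector}) is a direct $q$-analogue of the combinatorial argument for Theorem~\ref{theorem_k-tuple}: a ``very negative link'' lemma (Lemma~\ref{very_negative_vector}) proved via a linear-algebra trick (Lemma~\ref{lemma_gauss}) on the array of coefficients $\beta_{y,j}$ counting $t$-subspaces $Z$ with $X\subset Z$ and $\dim(Z\cap L)=y$, followed by a three-way case analysis on the size of the $3{t\brack k}_q$-th largest weight. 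No association-scheme eigenvalues enter at any point.

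Second, and independently of what the paper does, your spectral plan has a genuine gap at exactly the place you flag as ``the hard part.'' Knowing the eigenvalues of the inclusion map $M:\mathbb{R}^{{V\brack k}}\to\mathbb{R}^{{V\brack t}}$ on each $GL_n$-isotypic component lets you compute $\sum_T W(T)$ and $\sum_T W(T)^2$, and interlacing lets you compare eigenvalues of restrictions; none of these quantities controls the \emph{number of coordinates of a given sign} of $Mw$. From $\sum_T W(T)\ge 0$ one gets a single nonnegative $T$, not ${n-k\brack t-k}_q$ of them, and a small $\ell^2$-norm ratio between the non-top and top pieces of $Mw$ says nothing about how the signs of $W(T)$ are distributed without a further structural step. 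The paper sidesteps this entirely: it never tries to bound $|\{T:W(T)<0\}|$ globally, but instead shows that either the link of the maximum-weight $k$-subspace already contains almost ${n-k\brack t-k}_q$ nonnegative edges, or a certain weighted link sum is so negative that Lemma~\ref{very_negative_vector} produces the missing edges elsewhere. If you want to salvage a spectral route you would need a new idea converting eigenvalue dominance into a sign count; as written, that bridge is missing.
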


The rest of the paper is organized as follows. In Section \ref{section_mainproof} we prove Theorem \ref{main_theorem} and deduce Conjecture \ref{conj_mms_vector} as a corollary. In Section \ref{section_tightness} we will have two constructions, showing that
the $n>\Omega(k^3)$ bound for Theorem \ref{main_theorem} is essentially tight.
The ideas presented in Section 2 are not enough to prove Theorem \ref{theorem_k-tuple}. Hence, in Section \ref{section_kint} we develop more sophisticated techniques to prove this theorem. We also sketch the lemmas needed to obtain Theorem \ref{theorem_k-tuple_vector} and leave the proof details to the appendix. The final section contains some open problems and
further research directions.

\section{Equal codegrees and MMS property} \label{section_mainproof}
In this section we prove Theorem \ref{main_theorem}. Without loss of generality, we may assume that $V(H)=[n]$, and the weights are $1= w_1 \ge w_2 \ge \cdots \ge w_n$, such that $\sum_{i=1}^n w_i = 0$. Suppose the number of edges in $H$ is $e$. By double counting, we have
that $H$ is $d$-regular with $d=\frac{n-1}{r-1}\lambda$ and that $dn=re$.
Consider the $2r$-th largest weight $w_{2r}$, we will verify Theorem \ref{main_theorem} for the following three cases respectively: (i) $w_{2r} \le  \frac{1}{2r^2}$; (ii) $w_{2r} \ge \frac{1}{2r}$; and (iii) $\frac{1}{2r^2} \le w_{2r} \le \frac{1}{2r}$.
\medskip

\begin{lemma}\label{lemma_case1}
If $w_{2r} \le \frac{1}{2r^2}$, then $e^{+}(H) \ge d$.
\end{lemma}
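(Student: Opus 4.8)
The plan is to exploit the fact that in this regime the weights are "spread out": all but at most $2r-1$ vertices carry weight at most $\frac{1}{2r^2}$, so an edge fails to be nonnegative only if it contains enough of the heavy vertices to drag the sum below zero, and heavy vertices are scarce. Let me set $S = \{1, 2, \dots, 2r-1\}$ be the set of the $2r-1$ largest-weight vertices, so that $w_i \le \frac{1}{2r^2}$ for every $i \notin S$. An edge $A$ with $A \cap S = \emptyset$ has weight $\sum_{i \in A} w_i$; since we want a lower bound, the worst case is when $A$ picks up the most negative weights, but even then I want to compare against the positive contributions. Actually the cleaner route: I will show that almost every edge disjoint from a suitable small set is nonnegative, and count those.

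**First I would** estimate how negative an edge can be. Since $\sum w_i = 0$ and $w_1 = 1$, the total negative weight $\sum_{w_i < 0} |w_i|$ equals the total positive weight, which is at least $1$ but could be large; so a crude bound on a single edge is not enough, and I need to average. The key device is double counting: $\sum_{A \in E(H)} \sum_{i \in A} w_i = d \sum_i w_i = 0$, so the average edge-weight is exactly $0$. That alone does not bound $e^+(H)$ from below, so I will instead bound the number of edges that are "very negative." The plan is to partition the vertices into the heavy set $S$ (size $2r-1$) and the light set $L = [n] \setminus S$, observe that the restriction of $H$ to $L$ is still a hypergraph where every codegree is $\lambda$ (codegrees are inherited) and estimate the weight of a light edge $A \subseteq L$: it is $\sum_{i \in A} w_i \ge -\, r \cdot \max_{i \in L, w_i<0}|w_i|$ in the worst case, but more usefully, summing over all light edges through a fixed light vertex $j$ with $w_j \ge 0$ and using that each such edge has at most $r-1$ other light vertices each of weight $\le \frac{1}{2r^2}$ in absolute value only helps on the positive side — so the real content is controlling the negative tail. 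I will use that the number of edges containing a fixed pair of heavy vertices is $\lambda$, hence the number of edges meeting $S$ in $\ge 2$ vertices is at most $\binom{2r-1}{2}\lambda$, and the number of edges meeting $S$ in exactly one vertex is at most $(2r-1)\cdot d$; comparing these with $d = \binom{n-1}{1}\lambda/(r-1)$ shows that when $n > 10r^3$ the edges entirely inside $L$ already number more than $e - d$, i.e.\ more than $e - \delta(H)$ of the edges avoid $S$.

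**The main step** is then: every edge $A \subseteq L$ is nonnegative, or at least all but $< \delta(H)$ of them are. For $A \subseteq L$ we have $\sum_{i\in A} w_i \ge \sum_{i \in A, w_i < 0} w_i$. Here I need a handle on the small negative weights. Since $\sum_{i=1}^n w_i = 0$ and $w_1 = 1$, and since the $2r-1$ largest weights all lie in $S$, we get $\sum_{i \in L} w_i = -\sum_{i \in S} w_i \le -w_1 \cdot$ (something) — actually $\sum_{i\in S} w_i \ge w_1 = 1 - (2r-2)\cdot 0$, not obviously bounded below by a positive constant bigger than... wait, $w_1 = 1$ so $\sum_{i \in S} w_i \ge 1$ provided the other heavy weights are $\ge$ something; they could be negative. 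I will instead argue: if $w_{2r} \le \frac{1}{2r^2}$ then in particular $w_i \le \frac{1}{2r^2}$ for all $i \ge 2r$, and an edge $A$ with $|A \cap \{1,\dots,2r-1\}| = 0$ contains $r$ vertices each of weight at most $\frac{1}{2r^2}$ on the positive side; its weight is negative only if the negative weights among its vertices dominate. I would bound the number of "bad" light edges by relating $\sum_{\text{bad } A \subseteq L}(-\sum_{i\in A}w_i)$ to $\sum_{i \in L, w_i<0}|w_i| \cdot (\text{max degree in } H|_L)$ and then show this is $o(\delta(H))$, or more carefully, strictly less than $\delta(H)$, by plugging in $n > 10r^3$. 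The counting I would carry out is: the number of bad edges inside $L$ times the minimum possible positive "deficit" is at most $d \cdot \sum_{w_i < 0}|w_i|$, but $\sum_{w_i<0}|w_i|$ is not bounded, so I must instead note that a bad edge has at least one vertex of negative weight, and weight a fixed negative vertex $i$: it lies in at most $d$ edges, so contributes to at most $d$ bad edges, but there are at most ... — this is where I expect the real difficulty.

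**The hard part will be** converting the "spread-out weights" intuition into the clean bound $e^+(H) \ge d$ with equality characterization, because the naive union bound over negative vertices gives $\sum_{w_i<0}1 \cdot d$ bad edges which can exceed $\delta(H)=d$. The resolution, which I would pursue, is to not treat all light edges uniformly but to pick the "missing" vertex more cleverly: since only $2r-1$ vertices are heavy and $n > 10r^3$, for the vertex $v$ of minimum degree (or rather, for a cleverly chosen vertex) almost all edges through it are nonnegative because they dip into the light set, and edges \emph{not} through $v$ that are negative can be injected into edges through $v$ using the equal-codegree condition — this is the standard "shifting/injection against a star" paradigm, and the codegree equality $\lambda$ is exactly what makes the injection count work out, yielding both $e^+(H)\ge d=\delta(H)$ and that equality forces the nonnegative edges to be a star. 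So concretely: (1) fix $S$, the $2r-1$ heaviest vertices; (2) bound the number of edges meeting $S$ in $\ge 2$ points by $\binom{2r-1}{2}\lambda$ and show this is $< d$ for $n>10r^3$; (3) show every edge avoiding $S$ and every edge meeting $S$ in exactly the top vertex, except for a controlled few, is nonnegative, by bounding the negative contribution using $w_{2r}\le \frac{1}{2r^2}$ together with $\sum w_i = 0$ to control $\sum_{i\ge 2r, w_i<0}|w_i|$; (4) assemble the count and track the equality case through the inequalities in steps (2)–(3).
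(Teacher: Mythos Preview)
Your proposal is not a proof; it is an exploration that correctly locates the obstacle and then stops short of overcoming it. You try to show that edges entirely inside the ``light'' set $L=[n]\setminus S$ are (mostly) nonnegative, but you yourself note that there is no upper bound on $\sum_{w_i<0}|w_i|$ and that the naive union bound blows up. The ``shifting/injection against a star'' idea you close with is too vague to be an argument, and in fact nothing of that sort is used here.

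The paper's proof does something quite different and much more direct. Rather than looking at edges that avoid the heavy set, it focuses on edges through the single heaviest vertex~$1$. The key identity you never deploy is
\[
\sum_{e\ni 1}\ \sum_{j\in e\setminus\{1\}} w_j \;=\; \lambda\sum_{j\neq 1} w_j \;=\; -\lambda,
\]
which follows immediately from the equal-codegree hypothesis. Splitting the left side into the $m$ negative edges (each contributing $<-1$) and the $d-m$ nonnegative edges, and bounding the latter by noting that $w_2,\dots,w_{2r-1}$ each appear at most $\lambda$ times while every other term is at most $w_{2r}\le \tfrac{1}{2r^2}$, one gets $m<2r\lambda+\tfrac{d}{2r+1}<\tfrac{5d}{6r}$. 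This is exactly where the hypothesis $w_{2r}\le \tfrac{1}{2r^2}$ is spent; your sketch never actually uses it.

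If all edges through $1$ are nonnegative we are done. Otherwise one picks a negative edge $e\ni 1$, takes the largest positive weight $w_u$ with $u\notin e$, and runs a second double-count over the edges through $u$ disjoint from $e$, again using that all pairwise codegrees equal $\lambda$. This produces more than $\tfrac{5d}{6r}$ additional nonnegative edges disjoint from those counted through vertex~$1$, so the total exceeds $d$. Your plan to work with all of $S=\{1,\dots,2r-1\}$ at once and count edges avoiding $S$ never gets traction precisely because it throws away the anchor vertex~$1$ and the identity above.
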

\begin{proof}
First we show that among the $d$ edges containing $w_1$, the number of negative edges is at most $\frac{5d}{6r}$. Denote the negative edges by $e_1, \cdots, e_m$ and the nonnegative edges by $e_{m+1}, \cdots, e_d$. By the definition of a negative edge, for every $1 \le i \le m$ we have
$$\sum_{j \in e_i \backslash \{1\}} w_j < -w_1 = -1.$$ Summing these inequalities, we get
$$\sum_{i=1}^m \sum_{j \in e_i \backslash \{1\}} w_j < -m.$$
Now we consider the sum $\sum_{i=m+1}^d \sum_{j \in e_i \backslash \{1\}} w_j$ and rewrite it as $\sum_{j} \alpha_j w_j$. The sum of coefficients $\alpha_j$'s is equal to $(d-m)(r-1)$. Note that in this sum $w_2, \cdots, w_{2r}$ each appears
at most $\lambda$ times (their codegree with $\{1\}$) and they are bounded by $1$, so in total they contribute no more than $2r \lambda$. The remaining variables $w_{2r+1}, \cdots, w_n$ contribute less than $(d-m)(r-1) w_{2r} < \frac{d-m}{2r}$. Combining these three estimates, we obtain that
$$\sum_{1 \in e} \sum_{j \in e \backslash \{1\}} w_j < -m+ 2r \lambda + \frac{d-m}{2r}.$$
By double counting, the left hand side is equal to $\lambda(w_2 + \cdots + w_n) = -\lambda$. Comparing these two quantities and doing simple calculations we get
$$m < 2r \lambda + \frac{d}{2r+1}<\frac{d}{5r}+\frac{d}{2r+1}<\frac{5d}{6r}.$$
Here we used that $n>10r^3$ and $\lambda = \frac{r-1}{n-1}d<d/(10r^2)$. Therefore we may assume there are at least $(1-\frac{5}{6r})d$ nonnegative edges through $w_1$.

If every edge through $w_1$ is positive, then we are already done; so we assume that there exists a negative edge $e$ through $w_1$. Suppose $w_u$ is the largest positive weight of vertex not contained in $e$. Such $u$ exists since otherwise $\sum_{i=1}^n w_i<0$, so we may assume that $u \ge 2$ and $\{1, \cdots, u-1\} \subset e$.
We claim that in this case there are many nonnegative edges through $w_u$ which are disjoint from $e$. Consider the set $S$ of $r$-tuples consisting of  all the edges through $w_u$ which are disjoint from $e$. Since each of the $r$ vertices of $e$ has at most $\lambda$ common neighbors with $w_u$, we have
$|S| \ge d - r  \lambda.$ Denote by $S^{-}$ the set of negative edges in $S$ and consider the sum
$\sum_{f \in S^{-}} \sum_{j \in f \backslash \{u\}} w_j$. Obviously it is at most $- w_u|S^{-}|$. Rewrite this sum as $\lambda \cdot ( \sum_{j \not \in e \cup \{u\}} \alpha_j w_j )$. Since all codegrees are $\lambda$, $\alpha_j \in [0,1]$ and $\sum_{j \not \in e \cup \{u\}} \alpha_j = (r-1)|S^{-}|/\lambda$, which implies that $\sum_{j \not \in e \cup \{u\}} (1-\alpha_j) = (n-r-1)-(r-1)|S^{-}|/\lambda$. Therefore
\begin{align*}
-w_u &<  \sum_{j \not \in e \cup \{u\}} w_j = \sum_{j \not \in e \cup \{u\}} \alpha_j w_j + \sum_{j \not\in e \cup \{u\}} (1-\alpha_j) w_j\\
&< -\frac{|S^{-}|}{\lambda} \cdot w_u + (n-r-1-(r-1)|S^{-}|/\lambda) \cdot w_u,
\end{align*}
The first inequality uses that the sum of all the weights is zero and that $e$ is a negative edge, so $\sum_{j \in e} w_j <0$. To see the second inequality, just observe that $w_j \le w_u$ for every $j \not \in e \cup \{u\}$.

By simplifying the last inequality we get $\frac{|S^{-}|}{\lambda} < \frac{n-r}{r}$. Therefore the number of nonnegative edges containing $w_2$ that are disjoint from $e$ is at least
$$|S|-|S^{-}| > d-r\lambda-\frac{n-r}{r} \lambda = \frac{n-(r^3-2r^2+2r)}{r(n-1)} d,$$
which is greater than $\frac{5d}{6r}$ if $n>10r^3$. These nonnegative edges, together with the $(1-\frac{5}{6r})d$ nonnegative edges through $w_1$, already give more than  $d$ nonnegative edges.
\end{proof}

\begin{lemma}\label{lemma_case2}
If $w_{2r} \ge \frac{1}{2r}$, then $e^{+}(H) \ge d$.
\end{lemma}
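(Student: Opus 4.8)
The plan is to use the hypothesis $w_{2r}\ge\frac{1}{2r}$ in the form ``$H$ has at least $2r$ vertices of weight at least $\frac{1}{2r}$'', and to show that in the main case the union of the stars around these heavy vertices already contains $d$ nonnegative edges, after first peeling off two easy regimes. Throughout I write $e=\frac{nd}{r}$ for the number of edges and, since the statement is trivial when $\lambda=0$ (then $d=0$), I assume $\lambda\ge1$.

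First I would record a basic estimate. Put $N:=\sum_{i:\,w_i<0}(-w_i)$, which equals the total positive weight because $\sum_iw_i=0$; as $w_1+\dots+w_{2r}\ge 2r\cdot\frac{1}{2r}=1$ we have $N\ge1$. The estimate is: for every vertex $v$ with $w_v>0$, the number $g(v)$ of negative edges through $v$ satisfies $g(v)<\lambda N/w_v$. Indeed, summing the inequality $\sum_{j\in e\setminus\{v\}}w_j<-w_v$ over the negative edges $e\ni v$ and regrouping by the second endpoint rewrites the left-hand side as $\sum_{j\ne v}n_{vj}w_j$, where $n_{vj}\in[0,\lambda]$ is the number of negative edges containing $\{v,j\}$ — here the equal-codegree hypothesis enters, since every pair lies in exactly $\lambda$ edges. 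Dropping the terms with $w_j\ge0$ and using $n_{vj}\le\lambda$ on the remaining ones bounds the sum below by $-\lambda N$, whence $-g(v)w_v>-\lambda N$.

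Next I would dispose of two regimes. (a) If at most $\frac{n}{r}-1$ vertices have negative weight, then every edge contained in the set of nonnegative-weight vertices is nonnegative, and there are at least $e-\big(\frac{n}{r}-1\big)d=d$ of them, since each negative vertex lies in at most $d$ edges. (b) If $N\le 4r$, apply the estimate above to the $2r$ heaviest vertices $v_1=1,v_2,\dots,v_{2r}$: each has weight $\ge\frac1{2r}$, so $g(v_i)<2r\lambda N$, and since every pair $\{v_i,v_j\}$ lies in exactly $\lambda$ edges, inclusion--exclusion gives that the number of nonnegative edges meeting $\{v_1,\dots,v_{2r}\}$ is at least $\sum_{i=1}^{2r}\big(d-g(v_i)\big)-\binom{2r}{2}\lambda>2rd-4r^2\lambda N-2r^2\lambda$, which exceeds $d$ once $N\le 4r$, using $d=\frac{n-1}{r-1}\lambda$ and $n>10r^3$. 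This step is exactly where having at least $2r$ heavy vertices — the hypothesis of the lemma — is used.

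The main obstacle is the remaining regime: $N$ large (so the heavy-vertex bound in (b) is far too weak) together with many negative vertices (so the count in (a) falls short). Here I would argue that, because there are many negative vertices carrying a large total weight, the edge-sums $\sum_{j\in e}w_j$ must be rather concentrated about their mean $0$; the equal-codegree hypothesis lets one compute $\operatorname{Var}_e\!\big[\sum_{j\in e}w_j\big]=\tfrac{r(n-r)}{n(n-1)}\sum_jw_j^2$ exactly, and also yields $\#\{\text{edges inside a set }S\}\le\tfrac{|S|(|S|-1)}{r(r-1)}\lambda$. Combining a second-moment estimate with this last bound — after, if necessary, removing the $O(r)$ most negative vertices and accounting for the at most $O(r)\cdot d$ edges through them — should force at least $d$ of the edge-sums to be nonnegative. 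I expect the bulk of the work to lie in making this final case go through while keeping all constants compatible with $n>10r^3$, and in choosing the thresholds separating (a), (b) and this case so that they tile the whole range.
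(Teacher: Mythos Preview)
Your cases (a) and (b) are handled correctly, but the ``main case'' (more than $n/r-1$ negative vertices together with $N>4r$) is only a sketch, and that sketch is not a proof. A second-moment bound on the edge-sums $W_e$ is a two-sided statement and does not by itself lower-bound the \emph{number} of nonnegative edges: to extract a count one would need, for instance, a lower bound on $\sum_iw_i^2$ together with an upper bound on $\max_e|W_e|$, and in this regime neither is available --- a single very negative vertex can make $\max_e|W_e|$ arbitrarily large, while $\sum_iw_i^2$ can be as small as $O(N^2/n)$. Your suggestion to first remove the $O(r)$ most negative vertices addresses the former but not the latter, and in any event converting variance information into a one-sided count of at least $d$ nonnegative edges, with constants fitting under $n>10r^3$, is precisely the work you have not done. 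The case is certainly non-empty (take $w_1=\dots=w_{n/2}=1$ and $w_{n/2+1}=\dots=w_n=-1$), so this is a genuine gap.

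The underlying problem is that your basic estimate $g(v)<\lambda N/w_v$ discards all information about the positive weights, which is exactly why you are forced into an $N$-dependent case split. The paper avoids the split by sharpening this estimate so that it becomes independent of $N$. In your notation (with $\lambda\alpha_j:=n_{ij}\in[0,\lambda]$), besides $\sum_{j\ne i}\alpha_jw_j<-|S_i|w_i/\lambda$ one also has the complementary identity $\sum_{j\ne i}(1-\alpha_j)=n-1-(r-1)|S_i|/\lambda$. Feeding both into
\[
-w_i=\sum_{j\ne i}w_j=\sum_{j\ne i}\alpha_jw_j+\sum_{j\ne i}(1-\alpha_j)w_j,
\]
and bounding the second piece by $w_j\le 1$ for $j\le 2r$ and $w_j\le w_{2r}\le w_i$ for $j>2r$, one gets after dividing through by $w_i\ge\tfrac{1}{2r}$ that
\[
|S_i|\ \le\ \frac{(r-1)(n+4r^2)}{r(n-1)}\,d\ <\ \Bigl(1-\tfrac{3}{5r}\Bigr)d.
\]
Now each of the $2r$ heavy vertices carries at least $\tfrac{3}{5r}d$ nonnegative edges, and your own inclusion--exclusion from step~(b) finishes at once: $2r\cdot\tfrac{3}{5r}d-\binom{2r}{2}\lambda>d$ when $n>10r^3$. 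No variance, no vertex removal, and no case analysis on $N$ is needed.
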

\begin{proof}
First we claim for any $1 \le i \le 2r$, there are at least $\frac{3}{5r}d$ nonnegative edges containing $w_i$. Let $S_i$ be the set of
negative edges containing $w_i$, then for any edge $e \in S_i$, $\sum_{j \in e \backslash \{i\}} w_j < -w_i.$
Summing up these inequalities, we have
$$\sum_{e \in S_i} \sum_{j \in e \backslash \{i\}} w_j < -|S_i|w_i.$$
Like the previous case, suppose the left hand side can be rewritten as $\lambda \cdot \sum_{j \ne i} \alpha_j w_j$, then $\alpha_j \in [0,1]$, and $\sum_{j \ne i} \alpha_j = (r-1)|S_i|/\lambda$, which implies $\sum_{j \ne i} (1-\alpha_j) = n-1 - (r-1)|S_i|/\lambda$. Since $\sum_{j \ne i} w_j = -w_i$, we have
\begin{align*}
-w_i &= \sum_{j \ne i} \alpha_j w_j + \sum_{j \ne i} (1-\alpha_j) w_j\\
&< -\frac{|S_i|}{\lambda} w_i + \sum_{1 \le j \le 2r} w_j + w_i \cdot \sum_{j>2r} (1-\alpha_j) \\
&\le -\frac{|S_i|}{\lambda} w_i + 2r w_1 + \Big(n-1-(r-1)\frac{|S_i|}{\lambda}\Big) w_i
\end{align*}
Substituting $\lambda=\frac{r-1}{n-1}d$ and $w_i \ge w_{2r} \ge \frac{1}{2r}$, gives
$$|S_i| \le (n+4r^2) \frac{\lambda}{r} = \frac{(r-1)(n+4r^2)}{r(n-1)}d,$$
which is less than $(1-\frac{3}{5r})d$ when $n>10r^3$. So there are at least $\frac{3}{5r}d$ nonnegative edges containing $w_i$. Note that for $1 \le i < j \le 2r$, $w_i$ and $w_j$ are simultaneously contained in at most $\lambda$ edges. Therefore the total number of nonnegative edges is at least
$$2r \cdot \frac{3}{5r}d - \binom{2r}{2} \cdot \lambda \ge \frac{6}{5}d- \frac{2r^2(r-1)}{n-1} d.$$
When $n>10r^3$, this gives more than $d$ nonnegative edges.
\end{proof}

\begin{lemma}\label{lemma_case3}
If $\frac{1}{2r^2} \le w_{2r} \le \frac{1}{2r}$, then $e^{+}(H) \ge d$.
\end{lemma}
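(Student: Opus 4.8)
The plan is to combine the two mechanisms already in play — concentrating the count on the heaviest vertices (as in Lemma~\ref{lemma_case2}) and passing to a vertex disjoint from a fixed negative edge (as in Lemma~\ref{lemma_case1}) — with a third, global estimate counting the nonnegative edges contained entirely in the set of nonnegative‑weight vertices, and to show that in every sub‑configuration at least one of these suffices.

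Write $\Sigma = w_1 + \cdots + w_{2r}$. First I would note that the calculation proving Lemma~\ref{lemma_case2} never used $w_{2r}\ge\frac{1}{2r}$: for each $1\le i\le 2r$ we have $w_j\le w_{2r}\le w_i$ whenever $j>2r$, so the same manipulation yields
$$|S_i| < \frac{\lambda}{r}\Big(n+\frac{\Sigma}{w_i}\Big)$$
for the set $S_i$ of negative edges through $w_i$; summing $d-|S_i|$ over $i\le 2r$ and subtracting the overcount $\binom{2r}{2}\lambda$ gives $e^{+}(H)>d$ as soon as $\Sigma\sum_{i=1}^{2r}w_i^{-1}\le \frac{r(n-2r+1)}{r-1}-r\binom{2r}{2}$, whose right side exceeds $n-2r^3$ when $n>10r^3$. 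A companion bound comes from double counting $\sum_{e\ni i}w(e)=(d-\lambda)w_i>0$: splitting the edges through $w_i$ into the negative ones and the rest shows $|S_i|<\lambda W/w_i$, where $W:=\sum_{w_j>0}w_j$ is the total positive weight, so the same summation works whenever $W$ is below an absolute constant (then $|S_i|<2r^2\lambda W$ for $i\le 2r$). Consequently one may assume both that $\Sigma\sum w_i^{-1}$ is large and that $W$ is large; since $w_i\in[w_{2r},1]$ with $w_{2r}\ge\frac{1}{2r^2}$, this pins us to the regime where a substantial part of the weight sits on relatively heavy vertices while $w_{2r}$ is near $\frac{1}{2r^2}$.

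For that regime I would bring in the global estimate. Let $T$ be the set of nonnegative‑weight vertices and $q=n-|T|$; every edge inside $T$ is nonnegative. From the codegree identities $\sum_e\binom{|e\cap T|}{2}=\binom{|T|}{2}\lambda$ and $\sum_e|e\cap T|=|T|\,d$, the number of edges meeting $V(H)\setminus T$ is at most the value of the linear program $\max\sum_{k\ge1}b_k$ subject to $\sum_k k\,b_k=qd$, $\sum_k\binom{k}{2}b_k=\binom{q}{2}\lambda$, $b_k\ge0$, whose optimum is attained with intersection sizes only $1$ and $r$; this gives
$$e^{+}(H)\;\ge\;\#\{e:e\subseteq T\}\;\ge\;\frac{d}{r}\Big(n-qr+\frac{q(q-1)(r-1)}{n-1}\Big),$$
which is $\ge d$ once $q$ is below the smaller root of $(r-1)q^2-(rn-1)q+(n-1)(n-r)=0$, i.e.\ essentially $q\lesssim n/(r-1)$. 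Thus if $T$ is large we are finished. If $T$ is small there are many strictly negative vertices — the hardest case — and here one must recover the last few nonnegative edges by hand: assuming some edge $e\ni w_1$ is negative (otherwise all $d$ edges through $w_1$ are nonnegative and we are done), let $w_u$ be the heaviest vertex outside $e$, so $\{1,\dots,u-1\}\subseteq e$; the $>\frac{5d}{6r}$ nonnegative edges through $w_u$ disjoint from $e$ (exactly the argument in Lemma~\ref{lemma_case1}) avoid $w_1,\dots,w_{u-1}$, and one adds these to the nonnegative edges through $w_1,\dots,w_{2r}$ already counted, after discarding the ones counted twice.

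\noindent\textbf{Main obstacle.} The genuine difficulty is this last regime, and it is essentially a bookkeeping problem: each of the three families of nonnegative edges (through the heavy vertices; inside $T$; through $w_u$ disjoint from a negative edge through $w_1$) can individually be smaller than $d$, and one has to show that their union still has at least $\delta(H)=d$ members while keeping precise track of the multiply‑counted edges and of which of $w_1,\dots,w_{2r}$ each family uses. Making the constants close is exactly what forces $n>10r^3$ — the same threshold already needed in Lemma~\ref{lemma_case2} — and it is also the place where the equality case (all nonnegative edges forming a star) is pinned down.
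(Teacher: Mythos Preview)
Your plan has a real gap, and it is not just bookkeeping. Take $w_1=\cdots=w_s=1$, $w_{s+1}=\cdots=w_{2r}=\tfrac{1}{2r^2}$ for some $3\le s\le r$, with all remaining $n-2r$ weights negative. Then $\Sigma\approx s$ and $\sum_{i\le 2r}w_i^{-1}\approx (2r-s)\,2r^2$, so $\Sigma\sum w_i^{-1}$ is of order $s r^3\gg n-2r^3$ once $n=10r^3$; also $W=\Sigma\approx s$ is not an absolute constant, and $|T|=2r$ makes the global estimate worthless. Your Lemma~\ref{lemma_case2}-style bound $|S_i|<\tfrac{\lambda}{r}(n+\Sigma/w_i)$ now gives only $d-|S_i|\gtrsim d/r$ for $i\le s$ and nothing for $s<i\le 2r$; summing and subtracting $\binom{s}{2}\lambda$ leaves a deficit of order $\tfrac{r^3}{n}d$. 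The $\tfrac{5d}{6r}$ edges from the $w_u$-disjoint mechanism cannot close this for large $r$, and in fact contribute nothing new: any negative edge $e\ni 1$ has $\{1,\dots,u-1\}\subset e$, and since $\{1,\dots,s\}\subset e$ would force $w(e)\ge s>0$, we have $u\le s$ --- so the edges through $w_u$ are already among those you counted through the heavy vertices.

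What is missing is a genuine refinement of the inequality, not accounting. In the line $\sum_{j\ne i}(1-\alpha_j)w_j$ you bound the tail $\sum_{j>2r}(1-\alpha_j)w_j$ by $w_i$ times the number of terms; one should bound it by $w_{2r}$ times the number of terms instead. When $w_i\gg w_{2r}$ this turns the coefficient of $|S_i|/\lambda$ from $r\,w_i$ into $w_i+(r-1)w_{2r}$, and the resulting bound becomes $|S_i|<\tfrac{7}{15}d$ rather than merely $(1-\tfrac{1}{r})d$. The paper implements this via the threshold $t=\max\{i:w_i\ge 2r\,w_{2r}\}$: for each $i\le t$ the refined estimate gives $\tfrac{8}{15}d$ nonnegative edges, so if $t\ge 2$ two heavy vertices already suffice; if $t=1$ then $w_2<2r\,w_{2r}$, which makes the \emph{unrefined} bound for $2\le i\le 2r$ strong enough to combine with the $\tfrac{8}{15}d$ through $w_1$. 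Incidentally, your companion bound $|S_i|<\lambda W/w_i$, applied to the individual heavy vertices rather than uniformly via $w_i\ge\tfrac{1}{2r^2}$, is close in spirit to this and would rescue the example above --- but you invoke it only when $W$ is an absolute constant, which is exactly the regime where it is not needed.
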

\begin{proof}
Let $t$ be the index such that $w_t \ge 2r w_{2r}$ and $w_{t+1} < 2r w_{2r}$. Since $w_1 = 1 \ge 2r w_{2r}$ such $t$ exists and is between $1$ and $2r$.
For arbitrary $1 \le i \le t$, let $T_i$ be the set of negative edges containing $w_i$. Similarly as before, we assume
$$\sum_{e \in T_i} \sum_{j \in e \backslash \{i\}} w_j = \lambda \cdot \sum_{j \ne i} \alpha_j w_j.$$
Then $\sum_{j \ne i} \alpha_j w_j < -w_i |T_i|/\lambda$, and $\sum_{j \ne i} (1-\alpha_j) = n-1-(r-1)|T_i|/\lambda.$ We also have
\begin{align*}
-w_i &= \sum_{j \ne i} w_j=\sum_{j \ne i} \alpha_j w_j + \sum_{j \ne i} (1-\alpha_j) w_j\\
&< -\frac{|T_i|}{\lambda} w_i + \sum_{1 \le j \le t} (1-\alpha_j) w_j +  \sum_{t < j \le 2r} (1-\alpha_j)w_j + \sum_{j> 2r} (1-\alpha_j)w_j\\
&\le -\frac{|T_i|}{\lambda} w_i + t + (2r-t)w_t + (n-1-(r-1)|T_i|/\lambda) w_{2r} \\
&\le -\frac{|T_i|}{\lambda} w_i + t + (2r-t)w_t + (n-1-(r-1)|T_i|/\lambda) \frac{w_t}{2r}.
\end{align*}
Suppose $|T_i|/\lambda \geq 1$. Since $w_i \ge w_t$, we then have
$$\left( \frac{|T_i|}{\lambda}-1-(2r-t) - \frac{n-1-(r-1)|T_i|/\lambda}{2r}\right)w_t \le t \le t \cdot 2r^2w_{2r} \le     rt w_t.$$
Using that $n>10r^3$, $t \leq 2r$ and $\lambda=\frac{r-1}{n-1}d$ and rearranging the last inequality gives
$$
|T_i| \le \frac{2r}{3r-1} \left( \frac{n-1}{2r}+(r-1)t + 2r+1 \right) \lambda \le \frac{2r}{3r-1} \left( \frac{n-1}{2r}+2r^2+1 \right) \frac{r-1}{n-1} d
<\frac{7}{15}d.
$$
Since $\lambda<d/(10r^2)$, the above inequality also holds when $|T_i|<\lambda$. Therefore there are at least $\frac{8}{15}d$ nonnegative edges through $w_i$.
This completes the proof for $t\geq 2$, as the number of nonnegative edges through $w_1$ and $w_2$ is already at least
$\frac{16}{15}d-\lambda > d$ (when $n > 10r^3$).

If $t=1$, it means that $w_2<2rw_{2r}$. For $2 \le i \le 2r$, as before denote by $U_i$ the set of all the negative edges through $w_i$. Then similarly we define
$$\sum_{f \in U_i} \sum_{j \in f\backslash \{i\}} w_j = \lambda \sum_{j \ne i} \alpha_j w_j.$$
Note that $\alpha_i \in [0,1]$, $\sum_{j \ne i} \alpha_j w_j < -w_i |U_i|/\lambda$
and
$\sum_{j \ne i} (1-\alpha_j) = n-1- (r-1)|U_i|/\lambda$. So
\begin{align*}
-w_i &= \sum_{j \ne i}  w_j =\sum_{j \ne i} \alpha_j w_j + \sum_{j \ne i} (1-\alpha_j) w_j\le
-\frac{|U_i|}{\lambda} w_i+\sum_{i=1}^{2r}w_i+\sum_{j>2r} (1-\alpha_j) w_j\\
&\le -\frac{|U_i|}{\lambda} w_i + 1 + 2rw_2 + ((n-1-(r-1)|U_i|)/\lambda) w_i
\end{align*}
We have
$$\left(r\frac{|U_i|}{\lambda}-n\right)w_i \le 1 + 2rw_2 \le 2r^2w_{2r} + 2r \cdot 2rw_{2r} = 6r^2 w_{2r} \le 6r^2 w_i.$$
Therefore when $n>10r^3$,
$$|U_i| \le \frac{6r^2+n}{r} \cdot \frac{r-1}{n-1} \cdot d \le \Big(1-\frac{2}{5r}\Big)d.$$
Hence there are at least $\frac{2}{5r}d$ nonnegative edges through every $w_i$ when $2 \le i \le 2r$, together with the $\frac{8}{15}d$ nonnegative edges through $w_1$. Thus the total number of nonnegative edges is at least
$$\frac{8}{15}d + \frac{2}{5r}d (2r-1) - \binom{2r}{2} \lambda = \left( \frac{4}{3}-\frac{2}{5r}-\frac{2r^3-3r^2+r}{n-1}\right)d>d,$$
where we used that $\lambda=\frac{r-1}{n-1}d$ and $n>10r^3$.
\end{proof}
Combining Lemma \ref{lemma_case1}, Lemma \ref{lemma_case2} and Lemma \ref{lemma_case3} we show that $e^{+}(H) \ge d$. From the proofs it is not
hard to see that when $n>10r^3$ the only way to achieve the inequality is when the nonnegative edges form a star, i.e. contain a fixed vertex of $H$. This concludes the proof of Theorem \ref{main_theorem}. \hfill $\Box$

\vspace{0.15cm}
Next we use Theorem \ref{main_theorem} to prove the vector space analogue of Manickam-Mikl\'os-Singhi conjecture.

\vspace{0.15cm}
\noindent \textbf{Proof of Conjecture \ref{conj_mms_vector}}:
Let $H$ be the hypergraph such that the vertex set $V(H)$ consists of all the $1$-dimensional subspaces of $V=\mathbb{F}_q^n$. Obviously the number of vertices is equal to ${n \brack 1}_q$. Every $k$-dimensional subspace of $V$ defines an edge of $H$ which contains exactly ${k \brack 1}_q$ vertices. Therefore $H$ is an $r$-uniform hypergraph on $n'$ vertices with $r={k \brack 1}_q$ and $n'={n \brack 1}_q$. Since every two $1$-dimensional subspaces span a unique $2$-dimensional subspace, so the codegree of any two vertices in $H$ is equal to ${n-2 \brack k-2}_q$. Applying Theorem \ref{main_theorem}, as long as $n'>10r^3$, the minimum number of nonnegative edges in $H$ is at least equal to its degree, which is equal to ${n-1 \brack k-1}_q$. Actually the condition that $n'>10r^3$ is equivalent to
$$\frac{q^n-1}{q-1} > 10 \left( \frac{q^k-1}{q-1}\right)^3.$$
Since $n \ge 4k$, we have $q^n-1 \ge q^{4k}-1$. Moreover $(q^{4k}-1)/(q^k-1)^3 = (q^{3k}+q^{2k}+q^k+1)/(q^{2k}-2q^k+1) \ge q^k.$
From $k \ge 2$, we also have $(q-1)^2q^k \ge q^2(q-1)^2 > 10 $ if $q \ge 3$.
Therefore
$$\frac{q^{n}-1}{q-1} \ge \frac{(q^k-1)^3 q^k}{q-1} > 10 \left( \frac{q^k-1}{q-1}\right)^3.$$
For $q=2$, it is not hard to verify that the inequality is still satisfied when $k \ge 3$. The only remaining case is when $(q, k)= (2, 2)$. Again it is easy to check that the inequality holds when $n \ge 9$. The case $n=8$ was resolved by Manickam and Singhi \cite{manickam-singhi}, who proved
their conjecture when $k$ divides $n$.
\qed

\medskip
\noindent
{\bf Remark.} The statement of Conjecture \ref{conj_mms_vector} is known to be false only for $n < 2k$. Hence, it would be interesting to determine
the minimal $n=n(k)$ which implies this conjecture. Note that Theorem \ref{main_theorem} can be used to prove the assertion of Conjecture \ref{conj_mms_vector}
also for $n<4k$. For example it shows that this conjecture holds for $n \geq 3k$ and $ q \geq 5$, $n \geq 3k+1$ and $ q \geq 3$ or $n\geq 3k+2$ and all $q$. For large
$q$ the proof will work already starting with $n=3k-1$.

\section{The tightness of Theorem \ref{main_theorem}} \label{section_tightness}
In the previous section, we show that for every $r$-uniform $n$-vertex hypergraph with equal codegrees and $n>10r^3$, the minimum number of nonnegative edges is always achieved by the stars. Here we discuss the tightness of this result. As a warm-up example, recall that a finite projective plane has  $N^2+N+1$ points and $N^2+N+1$ lines such that every line contains $N+1$ points. Moreover every two points determine a unique line, and every two lines intersect at a unique point. If we regard points as vertices and lines as edges, this naturally corresponds to a $(N+1)$-uniform $(N+1)$-regular hypergraph with all codegrees equal to $1$. Let us assign weights $1$ to the $N+1$ points on a fixed line $l$, and weights $-\frac{N+1}{N^2}$ to the other points. Obviously the sum is nonnegative. On the other hand every line other than $l$ contains at most one point with positive weight, thus its sum of weight is at most $1-N \cdot \frac{N+1}{N^2} <0$. Therefore there is only one nonnegative edge. This already gives us a hypergraph with $n \sim r^2$ that is not MMS.

The next theorem provides an example of a hypergraph with $n\sim r^3$ for which there is a configuration of edges, different from a star, that also
achieves the minimum number of nonnegative edges.
\begin{theorem}
For infinitely many $r$ there is an $r$-uniform hypergraph $H$ with equal codegrees on $(r^3-2r^2+2r)$ vertices, and a weighting $w: V(H) \rightarrow \mathbb{R}$ with nonnegative sum, such that there are $\delta(H)$ nonnegative edges that do not form a star.
\end{theorem}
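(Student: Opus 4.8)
The plan is to realize the extremal hypergraph as the point--line incidence structure of a three-dimensional projective space. The key numerical observation is that if $q=r-1$ then $|PG(3,q)| = 1+q+q^2+q^3 = (r-1)^3+(r-1)^2+(r-1)+1 = r^3-2r^2+2r$, and the lines of $PG(3,q)$ have exactly $q+1=r$ points. So I would fix a prime power $q$, set $r=q+1$ (which gives infinitely many admissible $r$), and let $H$ be the hypergraph whose vertices are the points of $PG(3,q)$ and whose edges are the lines of $PG(3,q)$. First I would record the parameters: $H$ is $r$-uniform on $n=r^3-2r^2+2r$ vertices; since any two points of $PG(3,q)$ lie on a unique common line, every codegree of $H$ equals $\lambda=1$, so all codegrees are equal; and by the identity $d=\frac{n-1}{r-1}\lambda$ from Section~\ref{section_mainproof}, $H$ is $d$-regular with $d=\delta(H)=q^2+q+1$, which is indeed the number of lines through a fixed point of $PG(3,q)$.

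Next I would fix a plane $\pi$ of $PG(3,q)$. It is a copy of $PG(2,q)$, so it contains $q^2+q+1$ points and carries $q^2+q+1$ lines, and $PG(3,q)$ has exactly $q^3$ points outside $\pi$. Define the weighting $w$ by assigning weight $q^3$ to each point of $\pi$ and weight $-(q^2+q+1)$ to each point not in $\pi$. Then $\sum_v w_v = (q^2+q+1)q^3 - q^3(q^2+q+1) = 0 \ge 0$, so $w$ is admissible.

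Then I would classify the edges by their intersection with $\pi$. A line of $PG(3,q)$ either lies in $\pi$ or meets $\pi$ in exactly one point. If $\ell\subset\pi$, all $q+1$ of its points have weight $q^3$, so $w(\ell)=(q+1)q^3>0$. If $\ell\not\subset\pi$, then $\ell$ has one point in $\pi$ and $q$ points outside, so $w(\ell)=q^3-q(q^2+q+1)=-(q^2+q)<0$. Hence the nonnegative edges of $H$ are exactly the $q^2+q+1=\delta(H)$ lines contained in $\pi$, and these do not form a star: in $PG(2,q)$ with $q\ge 2$ only $q+1<q^2+q+1$ lines pass through any single point, so no vertex of $H$ lies on all of them. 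This completes the argument.

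There is essentially no combinatorial difficulty once the model is found: the whole content is the recognition that $r^3-2r^2+2r$ is precisely the number of points of $PG(3,q)$ with $q=r-1$, together with the fact that a plane of $PG(3,q)$ carries exactly $\delta(H)$ lines and is ``balanced'' enough that the natural weighting makes exactly those lines nonnegative. The only place that needs a little care is the parameter bookkeeping --- in particular confirming that the minimum degree equals $q^2+q+1$ rather than something larger, and that a plane contains exactly $q^2+q+1$ lines --- but these are standard facts about projective geometry.
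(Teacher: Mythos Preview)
Your proof is correct and essentially identical to the paper's: both take $H$ to be the point--line hypergraph of $PG(3,q)$ with $r=q+1$, fix a plane $\pi$, and weight points in $\pi$ positively and points outside negatively so that exactly the $q^2+q+1=\delta(H)$ lines of $\pi$ are nonnegative. The only cosmetic difference is the normalization of the weights (the paper uses $1$ and $-(q^2+q+1)/q^3$ where you use $q^3$ and $-(q^2+q+1)$).
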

\begin{proof}
Let $r=q+1$, where $q$ is a prime power. Denote by $\mathbb{F}_q$ the finite field with $q$ elements. Define a hypergraph $H$ with the vertex set $V(H)$ consisting of points from the $3$-dimensional projective space $PG(3, \mathbb{F}_q)$. Here $PG(n, \mathbb{F}_q)=(\mathbb{F}_q^{n+1} \backslash \{0\}) /\sim$, with the equivalence relation $(x_0, \cdots , x_n) \sim (\sigma x_0, \cdots, \sigma x_n)$, where $\sigma$ is an arbitrary number from $\mathbb{F}_q$. It is easy to see that $n=|V(H)|=q^3+q^2+q+1=r^3-2r^2+2r$. Every $1$-dimensional subspace of $PG(3, \mathbb{F}_q)$ defines an edge of $H$ with $q+1=r$ elements. It is not hard to check that $H$ is $d$-regular for $d=q^2+q+1$, and every pair of vertices has codegree $1$.

Now we assign the weights to $V(H)$ in the following way. Let $S$ be the set of points of a $2$-dimensional projective subspace of $V(H)$, then $|S|=q^2+q+1$.
Every vertex from $S$ receives weight $1$, and every vertex outside $S$ has weight $-\frac{q^2+q+1}{q^3}$, such that the total weight is zero. Note that every edge has size $q+1$, so if it contains at most one vertex from $S$, its total weight is at most $1-q \cdot \frac{q^2+q+1}{q^3}<0$. Therefore every nonnegative edge must contain at least two vertices from $S$. Since $S$ is a subspace, the lines containing $2$ points from $S$ are completely contained in $S$.
There are precisely $q^2+q+1=d$ lines in $S$ (these are all the nonnegative edges in $H$) and they do not form a star.
\end{proof}

Finally, we give an example which shows that one might find hypergraphs with $n\sim r^3$ and weights such that the number of nonnegative edges is strictly smaller than the vertex degree.

\begin{theorem}
If $q$ and $q+1$ are both prime powers, then there exists a $(q+1)$-uniform $(q+1)^2$-regular
hypergraph $H$ on $(q^3+2q^2+q+1)$ vertices with all codegrees  equal to $1$, and an assignment of weights with nonnegative sum such that there are strictly less than $(q+1)^2$ nonnegative edges in $H$.
In particular if there are infinitely
many Mersenne primes, then we obtain infinitely many such hypergraphs.
\end{theorem}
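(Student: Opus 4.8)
The plan is to realize $H$ as a Steiner system $S(2,q+1,q^3+2q^2+q+1)$ — a $(q+1)$-uniform hypergraph in which every pair of vertices lies in a unique edge — containing a \emph{closed} copy $\Pi$ of the projective plane of order $q$, and then to use the weighting that is $+1$ on $\Pi$ and a single negative constant off $\Pi$. Note first that any $(q+1)$-uniform hypergraph on $n=q^3+2q^2+q+1$ vertices with all codegrees $1$ is automatically $(q+1)^2$-regular, since double counting gives $\delta(H)(r-1)=\lambda(n-1)$ with $r=q+1$ and $\lambda=1$. So it suffices to build the Steiner system and to locate the subplane $\Pi$.

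For the construction, the key observation is the factorization $q^3+2q^2+q+1=(q+1)\cdot q(q+1)+1$. Since $q$ and $q+1$ are prime powers, there are $q-1$ MOLS of order $q$ and $q$ MOLS of order $q+1$, so by MacNeish's theorem there are at least $\min(q-1,q)=q-1$ MOLS of order $q(q+1)$; equivalently, a transversal design $TD(q+1,q(q+1))$ exists — a ground set split into $q+1$ groups of size $q(q+1)$, with $(q+1)$-element blocks meeting each group once, every pair of points from distinct groups lying in exactly one block. I would then adjoin a new point $\infty$, and on each of the $q+1$ sets obtained by adding $\infty$ to a group (each of size $q^2+q+1$) place a copy of $PG(2,q)$; since the groups are disjoint, these plane-copies pairwise meet only in $\infty$. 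This defines a $(q+1)$-uniform hypergraph $H$ on $(q+1)q(q+1)+1=q^3+2q^2+q+1$ vertices. Checking codegrees is routine: a pair within one group, or a pair consisting of $\infty$ and a group point, is covered exactly once — by the plane on that group — and by nothing else, while a pair from two distinct groups is covered exactly once, by the transversal design. Hence all codegrees are $1$ and $H$ is $(q+1)^2$-regular. Take $\Pi$ to be one of the plane-copies; any two of its vertices lie on a line of $\Pi$, so by codegree $1$ they lie on no other edge, i.e. $\Pi$ is closed in $H$.

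It then remains to weight and count. Put weight $+1$ on each of the $q^2+q+1$ vertices of $\Pi$ and weight $-\tfrac{q^2+q+1}{q^3+q^2}$ on each of the remaining $q^3+q^2$ vertices, so the total is $0$. An edge meeting $\Pi$ in at least two vertices is, by closedness, a line of $\Pi$; all its $q+1$ weights are $1$, so it is nonnegative, and there are exactly $q^2+q+1$ such edges. Any other edge meets $\Pi$ in at most one vertex — a transversal-design block hits each group once and misses $\infty$, and a block of another plane-copy lies in its group together with $\infty$ — so it has at least $q$ vertices of negative weight, hence total weight at most $1-q\cdot\tfrac{q^2+q+1}{q^3+q^2}=-\tfrac{1}{q^2+q}<0$, and is negative. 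Therefore $e^{+}(H)=q^2+q+1<q^2+2q+1=(q+1)^2=\delta(H)$. Finally, $q$ and $q+1$ are both prime powers whenever $q=2^k-1$ is a Mersenne prime (then $q+1=2^k$), so an infinitude of Mersenne primes would produce infinitely many such hypergraphs.

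The main obstacle is the construction itself: recognizing the factorization $q^3+2q^2+q+1=(q+1)\cdot q(q+1)+1$ and that the right object is a transversal design $TD(q+1,q(q+1))$ completed by projective planes through a common point. The MOLS bookkeeping is tight — MacNeish's theorem yields exactly the $q-1$ MOLS of order $q(q+1)$ that $TD(q+1,q(q+1))$ requires, which is precisely where the hypothesis that $q+1$ is a prime power is used — and one must carefully verify codegree $1$, regularity, and closedness of $\Pi$. Once the structure is in hand, the weighting argument is a one-line computation.
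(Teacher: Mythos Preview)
Your proof is correct, and the construction you give is genuinely different from the paper's, though both land on the same weighting argument.

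The paper splits the vertex set as $V_1\cup V_2$ with $|V_1|=q^2+q+1$ and $|V_2|=q^2(q+1)$, puts a single $PG(2,q)$ on $V_1$, and then has to manufacture all remaining codegrees by hand: it partitions $V_2$ into $q$ parts of size $q(q+1)$, places a punctured $PG(2,q)$ on each (yielding the $K_{q+1}$-blocks inside $V_2$), and then constructs an explicit $K_q$-decomposition of the complete $q$-partite graph $K_{q^2+q,\ldots,q^2+q}$, resolvable into $K_q$-factors, via an algebraic labeling by $\mathbb{F}_q\times\mathbb{F}_q\times\mathbb{F}_{q+1}$ and the clique family $C_{i,j,k,l}$. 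The factor structure is what lets each $K_q$ be matched to a vertex of $V_1$ to form the mixed edges. The hypothesis that $q+1$ is a prime power enters only in this last algebraic step.

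Your route is cleaner from a design-theoretic standpoint: the identity $n=(q+1)\cdot q(q+1)+1$ plus MacNeish (using both prime-power hypotheses simultaneously) hands you a $TD(q+1,q(q+1))$, and extending each group by a common point $\infty$ with a copy of $PG(2,q)$ is a one-step completion to an $S(2,q+1,n)$. This avoids the factor/resolvability bookkeeping entirely and makes the closedness of $\Pi$ immediate. The trade-off is that the paper's construction is fully explicit (no appeal to MacNeish), while yours invokes a standard but non-trivial black box. The weighting you use is just a rescaling of the paper's, and the count $e^+(H)=q^2+q+1<(q+1)^2$ goes through identically.
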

\begin{proof}
Let $V(H)=V_1 \cup V_2$, such that $|V_1|=q^2+q+1$, and $|V_2|=q^2(q+1)$. We first take $H_1$ to be the projective plane $PG(2, \mathbb{F}_q)$ on $V_1$ with edges corresponding to the projective lines. In other words $H_1$ is a $(q+1)$-uniform hypergraph with degree $q+1$ and codegree $1$. The hypergraph $H_2$ consists of some $(q+1)$-tuples that intersect $V_1$ in exactly one vertex and intersect $V_2$ in $q$ vertices, such that $e(H_2)=(q^2+q+1)(q^2+q)$. $H_3$ is a $(q+1)$-uniform hypergraph on $V_2$ with $q^3$ edges. We will carefully define the edges of $H_2$ and $H_3$ soon.

We hope $H_2$ and $H_3$ to satisfy the following properties: (i) for every pair of vertices $u \in V_1$ and $v \in V_2$, their codegree in $H_2$ is equal to $1$; (ii) note that every edge in $H_2$ naturally induces a clique of size $q$ in $\binom{V_2}{2}$; while every edge in $H_3$ induces a clique of size $q+1$ in $\binom{V_2}{2}$. We hope these cliques form an edge partition of the complete graph $K_{|V_2|}=K_{q^2(q+1)}$. It is not hard to see that if (i), (ii) are both satisfied, then the hypergraph $H= H_1 \cup H_2 \cup H_3$ has codegree $\delta=1$. And $H$ is a regular hypergraph with degrees equal to
$$\delta \cdot \frac{n-1}{r-1} = \frac{(q^3+2q^2+q+1)-1}{(q+1)-1}=(q+1)^2.$$

Now we assign weights to $V(H)$, such that every vertex in $V_2$ receives a weight $-1$, while every vertex in $V_1$ receives a weight $\frac{q^2(q+1)}{q^2+q+1}$, so the total weight is zero. If an edge is nonnegative, it must contain at least two vertices from $V_1$, since $\frac{q^2(q+1)}{q^2+q+1} + (-1) \cdot q <0$. Such edge can only come from $H_1$. However we have $e(H_1) = q^2+q+1$, which is strictly smaller than the degree $(q+1)^2$. Therefore what remains is to show the existence of $H_2$ and $H_3$ satisfying (i), (ii). In other words, we need to find a clique partition in (ii) with
$$K_{q^2(q+1)} = q^3 \cdot  K_{q+1} \cup (q^2+q+1)(q^2+q) \cdot K_q. $$
Moreover, condition (i) requires that the family of $K_q$'s can be partitioned into $K_q$-factors.

A natural idea is to partition $[q^2(q+1)]=S_1 \cup \cdots \cup S_q$ with $|S_i|=q(q+1)$. Observe that the projective plane $PG(2, \mathbb{F}_q)$ defines a clique partition $K_{q^2+q+1}= (q^2+q+1) K_{q+1}$. By removing one vertex from it, we obtain a partition
$K_{q^2+q} = (q+1) \cdot K_q \cup q^2 \cdot K_{q+1}$. By doing this for every $S_i$, we get the $q^3$ copies of $K_{q+1}$ we want, and $(q+1)q$ copies of $K_q$, which clearly form a $K_q$-factor, since the $(q+1)$ copies of $K_q$ from each $S_i$ are pairwise disjoint.
We still need to find an edge partition of the balanced complete $q$-partite graph $K_{q^2+q, \cdots, q^2+q}$ into $(q^2+q)^2$ copies of $K_q$, so that they also can be grouped into $q^2+q$ disjoint $K_q$-factors.

Suppose we know that $q+1$ is also a prime power. Label the vertices in $K_{q^2+q, \cdots, q^2+q}$ by $(x, y, z)$ where $x \in \mathbb{F}_{q}$, $y \in \mathbb{F}_{q}$, and $z \in \mathbb{F}_{q+1}$. Two vertices $(x, y, z)$ and $(x', y', z')$ are adjacent iff $x \ne x'$. Now we define $(q^2+q)^2$ cliques $C_{i, j, k, l}$'s for $i, k \in \mathbb{F}_q$ and $j, l \in \mathbb{F}_{q+1}$. The clique $C_{i,j,k,l}$ consists of vertices in the form of $(x, i+kx, j+lf(x))$ for all $x \in \mathbb{F}_q$, where $f$ is a fixed injective map from $\mathbb{F}_q$ to $\mathbb{F}_{q+1}$. Suppose $(x, y, z)$ and $(x', y', z')$ with $x \ne x'$ are both contained in the clique $C_{i,j,k,l}$, then we have
\begin{align*}
i+kx=y~&~~i+kx'=y'\\
j+lf(x)  = z ~&~~ j+lf(x') = z'.
\end{align*}
Since $x \ne x'$, the first two equations uniquely determine $i$ and $k$. Moreover, $f(x)$ and $f(x')$ are different elements of $\mathbb{F}_{q+1}$ since $f$ is injective, thus $j, l$ are also uniquely determined. Therefore $\{C_{i,j,k,l}\}$ forms a $K_q$-partition of the edges of $K_{q^2+q, \cdots, q^2+q}$, and it is not hard to see that they can be partitioned into $K_q$-factors by fixing $k$ and $l$.

By the above discussions, if we have both of $q$ and $q+1$ to be powers of prime, in particular when $q=2^n-1$ is a Mersenne prime, then we can explicitly construct the hypergraph.
\end{proof}

\section{Two additional generalizations} \label{section_kint}
In the next two subsections we discuss generalizations of the two Manickam-Mikl\'os-Singhi conjectures and prove
Theorem \ref{theorem_k-tuple} and Theorem \ref{theorem_k-tuple_vector}. For $k=1$ they follow from Theorem \ref{main_theorem}, thus we can assume
that $k\geq 2$. In that case, as we mentioned earlier in the introduction, these two theorems are not direct consequences of Theorem \ref{main_theorem} because the codegrees in the corresponding hypergraphs are not equal.

\subsection{Generalization of MMS} \label{subsection_mms}
In this subsection we will prove Theorem \ref{theorem_k-tuple}. This requires some new ideas and techniques since
direct adaptation of the proof of Theorem \ref{main_theorem} does not work. Indeed, it is easy to construct a weighting such that there is no nonnegative edge through the vertex (a $k$-set) of maximal weight. For example say $k=2$, one can take $w_{\{1, 2\}}=1$, the weights of all the $2n-4$ pairs containing $1$ or $2$ to be $-\frac{n-3}{10}$, and the rest to have weights roughly $\frac{2}{5}$. For sufficiently large $n$, no $t$-set containing $\{1, 2\}$ has nonnegative total weights.

First we prove a simple lemma from linear algebra.
\begin{lemma}\label{lemma_gauss}
Suppose the $s \times s$ lower triangular matrix $\beta=\{\beta_{i,j}\}$ satisfies that $\beta_{i,i} >0$ and for every $j<k \le i$, $0 \le \beta_{i,j} \le \beta_{i,k}$. Then for a given vector $\vec{b}=(b_1, \cdots, b_s)$ such that $b_1 \ge \cdots \ge b_s \ge 0$, the equation $\vec{b} = \vec{\gamma} \cdot \beta$ has a unique solution $\vec{\gamma}=(\gamma_1, \cdots, \gamma_s)$ and moreover $0 \le \gamma_i \le b_i/\beta_{i, i}$.
\end{lemma}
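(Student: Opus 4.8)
The plan is to separate existence/uniqueness from the two inequalities. Uniqueness is immediate: $\beta$ is lower triangular with $\det\beta=\prod_i\beta_{i,i}>0$, so it is invertible and $\vec\gamma=\vec b\,\beta^{-1}$ is the only solution. Writing $\vec b=\vec\gamma\,\beta$ coordinatewise and using $\beta_{i,j}=0$ for $i<j$ gives the triangular system $b_j=\sum_{i\ge j}\gamma_i\beta_{i,j}$, which I would solve by back-substitution from $j=s$ down to $j=1$ via
$$\gamma_j\beta_{j,j}=b_j-\sum_{i>j}\gamma_i\beta_{i,j}.$$
Thus the whole content of the lemma is the bound $0\le\gamma_j\le b_j/\beta_{j,j}$, which I will prove by downward induction on $j$.

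First note the easy half. Once we know $\gamma_i\ge 0$ for all $i>j$, the sum $\sum_{i>j}\gamma_i\beta_{i,j}$ is nonnegative (using $\beta_{i,j}\ge 0$), so the displayed identity gives $\gamma_j\beta_{j,j}\le b_j$, hence $\gamma_j\le b_j/\beta_{j,j}$. The real issue is $\gamma_j\ge 0$, i.e. $\sum_{i>j}\gamma_i\beta_{i,j}\le b_j$. The crude estimate $\gamma_i\beta_{i,j}\le\gamma_i\beta_{i,i}\le b_i$ only yields $\sum_{i>j}\gamma_i\beta_{i,j}\le\sum_{i>j}b_i$, which is far too weak. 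I expect this to be the main obstacle: one must find the right strengthening of the induction hypothesis.

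The strengthening I would carry is that, in addition to $0\le\gamma_i\le b_i/\beta_{i,i}$, for every $k<j$ one has $\sum_{i\ge j}\gamma_i\beta_{i,k}\le b_j$. The base case $j=s$ is direct, since $\gamma_s=b_s/\beta_{s,s}\ge 0$ and $\gamma_s\beta_{s,k}\le\gamma_s\beta_{s,s}=b_s$ for $k<s$. For the step from $j+1$ to $j$: the auxiliary hypothesis at level $j+1$ with $k=j$ gives $\sum_{i>j}\gamma_i\beta_{i,j}\le b_{j+1}\le b_j$ (using that $\vec b$ is nonincreasing), so $\gamma_j\ge 0$, and then $\gamma_j\le b_j/\beta_{j,j}$ as above. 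To re-establish the auxiliary bound at level $j$, fix $k<j$; since $\gamma_j\ge 0$ and $\beta_{j,k}\le\beta_{j,j}$ we get $\gamma_j\beta_{j,k}\le\gamma_j\beta_{j,j}=b_j-\sum_{i>j}\gamma_i\beta_{i,j}$, hence
$$\sum_{i\ge j}\gamma_i\beta_{i,k}\le b_j-\sum_{i>j}\gamma_i(\beta_{i,j}-\beta_{i,k})\le b_j,$$
where the final step uses $\gamma_i\ge 0$ and $\beta_{i,j}\ge\beta_{i,k}$ for $i>j>k$, which is precisely the row-monotonicity hypothesis on $\beta$. This closes the induction, and at $j=1$ we obtain the claimed bounds for all $\gamma_i$.

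It is worth recording where each hypothesis enters: positivity of the diagonal for invertibility and for dividing; nonnegativity of the $\beta_{i,j}$ for the easy upper bound; and the monotonicity of the rows of $\beta$ together with $b_1\ge\cdots\ge b_s$ only in the two telescoping inequalities above. Everything else is bookkeeping, so I anticipate that the only genuinely nontrivial step is guessing the auxiliary invariant; after that the argument is routine.
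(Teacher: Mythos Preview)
Your proof is correct and follows the same downward back-substitution strategy as the paper. The one difference worth noting is that the paper avoids your auxiliary invariant entirely: to show $\gamma_k\ge 0$ it simply observes, using row monotonicity and $\gamma_i\ge 0$ for $i>k$, that
\[
b_k=\beta_{k,k}\gamma_k+\sum_{i>k}\gamma_i\beta_{i,k}\le\beta_{k,k}\gamma_k+\sum_{i>k}\gamma_i\beta_{i,k+1}=\beta_{k,k}\gamma_k+b_{k+1},
\]
the last equality being the defining equation for $b_{k+1}$; then $b_{k+1}\le b_k$ gives $\gamma_k\ge 0$. This is exactly your invariant specialized to the single case $k=j$ at level $j+1$, but since that case comes for free from the system itself, there is no need to carry and re-establish the full family $\sum_{i\ge j}\gamma_i\beta_{i,k}\le b_j$. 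So what you flagged as the ``genuinely nontrivial step'' of guessing the strengthened hypothesis can in fact be bypassed.
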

\begin{proof}
The existence and uniqueness of $\vec{\gamma}$ follow from the fact that $\beta$ is invertible. Next we inductively prove $0 \le \gamma_i \le b_i/\beta_{i, i}$. We start
from $\gamma_s$, from the equation we know $b_s=\gamma_s \beta_{s, s}$. So $\gamma_s = b_s / \beta_{s,s}$ and the inductive hypothesis is true.
Suppose $0 \le \gamma_i \le b_i/\beta_{i,i}$ for every $i>k$. Now from the linear equation, we have
$b_k= \beta_{k, k} \gamma_k + \beta_{k+1, k} \gamma_{k+1} \cdots + \beta_{s, k} \gamma_s.$ Since $\gamma_i, i >k$ and $\beta_{i,j}$ are nonnegative, we have
$\gamma_k \le b_k / \beta_{k,k}$.
Note that $\beta_{i,j}$ is increasing in $j$, so for every $k+1 \le i \le s$, $\beta_{i, k} \le \beta_{i, k+1}$. Therefore $b_k \le \beta_{k, k} \gamma_k + \sum_{i=k+1}^s \beta_{i, k+1} \gamma_i = \beta_{k,k} \gamma_k + b_{k+1}$. Since $0 \le b_{k+1} \le b_{k}$, we know that $\gamma_k \geq 0$.
\end{proof}

Without loss of generality, we may assume that $\sum_{X \subset \binom{ [n]}{k}} w_{X} = 0$; and
$w_{\{1, \cdots, k\}}$, or alternatively written as $w_{[k]}$, has the largest positive weight. We may let $w_{[k]}=1$, then $w_X \le 1$ for every
$k$-set $X$.
Throughout this section, we also assume that $n>Ct^{3k+3}$, here $C$ is some sufficiently large constant.
The next lemma shows that if the sum of weights of certain edges is very negative, then we already have enough nonnegative edges.
\begin{lemma}\label{very_negative}
If for some subset $|L|=k$,
$$\sum_{L \subset Y, |Y|=t} \sum_{L \ne X \subset Y} w_X \le -\frac{1}{13t^{2k}}\binom{n-k}{t-k},$$
and $\sum_{X \ne L} w_X \ge -1$, then there are more than $\binom{n-k}{t-k}$ nonnegative edges in $H$.
\end{lemma}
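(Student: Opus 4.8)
The plan is to first normalize, following the conventions of this section, so that $\sum_{X} w_X = 0$ and $\max_X w_X = 1$; in particular $w_X\le 1$ for every $k$-set $X$, and the hypothesis $\sum_{X\ne L}w_X\ge -1$ merely records $w_L\le 1$. Write $D$ for the left-hand side of the assumed inequality. Double counting gives
$$D=\sum_{X\ne L}w_X\binom{n-|X\cup L|}{t-|X\cup L|},$$
and since $|X\cup L|\ge k+1$ for $X\ne L$, every coefficient here is at most $\binom{n-k-1}{t-k-1}$ (and is $0$ once $|X\cup L|>t$). As the terms with $w_X\ge 0$ contribute a nonnegative amount, the assumption forces $\sum_{w_X<0}|w_X|\binom{n-|X\cup L|}{t-|X\cup L|}\ge \frac1{13t^{2k}}\binom{n-k}{t-k}$, hence
$$\sum_{X:\,w_X<0}|w_X|\ \ge\ \frac1{13t^{2k}}\cdot\frac{n-k}{t-k}\ \ge\ \frac{n-k}{13t^{2k+1}},$$
which, because $n>Ct^{3k+3}$, is much larger than $t^k\ge\binom tk$. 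Since the total weight is $0$, the quantity $\sum_{X:\,w_X>0}w_X$ is at least this large as well, and as each $w_X\le 1$, at least $\frac{n-k}{13t^{2k+1}}$ distinct $k$-sets carry positive weight. So the hypothesis really says that an overwhelming amount of negative weight — and, compensating it, of positive weight — is present among the $k$-sets.

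The core of the argument is Step 2: turning this surplus into strictly more than $\binom{n-k}{t-k}$ nonnegative $t$-sets. Since the $\binom{n-k}{t-k}$ $t$-sets containing $L$ split into nonnegative and negative ones,
$$\#\{\text{nonneg }t\text{-sets}\}=\binom{n-k}{t-k}-\#\{\text{negative }W\supseteq L\}+\#\{\text{nonneg }W\not\supseteq L\},$$
so it suffices to exhibit strictly more nonnegative $t$-sets avoiding $L$ than negative $t$-sets containing $L$. The plan is to classify all $t$-sets $W$ by the level $i:=|W\cap L|$, estimate for each $i<k$ the number of nonnegative $W$ at level $i$, and bound from above (trivially by $\binom{n-k}{t-k}$, or better) the count at level $k$. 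A $t$-set $W$ with small $|W\cap L|$ contains almost no $k$-set meeting $L$, so its weight is essentially determined by the weights of the $k$-subsets of $W$ lying in $[n]\setminus L$; by Step 1 (together with $\sum w_X=0$ and $w_X\le 1$) these carry a large positive total, which should make most such $W$ nonnegative. To combine the levels cleanly I would set up a lower-triangular linear system relating the level-$i$ counts to more manageable quantities, check that its matrix has the monotone nonnegative structure of Lemma \ref{lemma_gauss}, and invoke that lemma to read off the counts with the correct signs.

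The step I expect to be the real difficulty is precisely this level-by-level analysis, and in particular ruling out the scenario in which the surplus negative weight is concealed on $k$-sets that overlap $L$ heavily — e.g.\ in $k-1$ of their $k$ elements, of which there are only $\Theta(n)$ but which enter $D$ with the maximal coefficient $\binom{n-k-1}{t-k-1}$. Such a weighting can drive $D$ far below $-\frac1{13t^{2k}}\binom{n-k}{t-k}$ while keeping $\sum_{X\cap L=\emptyset}w_X$ close to $0$, so one cannot simply argue that ``most $t$-sets disjoint from $L$ are nonnegative''. The resolution must quantify how much of the compensating positive mass is forced onto $k$-sets inside $[n]\setminus L$ — there are far too few $k$-sets meeting $L$, each of weight at most $1$, to absorb it all — and then convert that mass into nonnegative $t$-sets at the appropriate level $i$. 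I expect the precise constant $13$ and the exponent $2k$ in the statement to be tuned exactly so that these estimates leave room for the strict inequality.
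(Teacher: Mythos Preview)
Your Step~1 is correct but a detour: knowing there is a large total positive weight does not by itself produce nonnegative $t$-sets, and the paper never uses this fact. The crucial object you are missing is not total mass but the \emph{graded} sums $\Sigma_j:=\sum_{|X\cap L|=j}w_X$. Grouping your formula for $D$ by $j=|X\cap L|$ and subtracting off the top coefficient $\binom{n-k-1}{t-k-1}$ (this is where $\sum_{X\ne L}w_X\ge -1$ enters) gives the key lower bound
\[
\sum_{j=0}^{k-2} b_j\,\Sigma_j \ \ge\ \frac{1}{14t^{2k}}\binom{n-k}{t-k},
\qquad b_j:=\binom{n-k-1}{t-k-1}-\binom{n-2k+j}{t-2k+j},
\]
with $b_0\ge\cdots\ge b_{k-2}>0$. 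This inequality, not a count of positive $k$-sets, is what drives the argument.

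The paper does \emph{not} resolve your ``real difficulty'' by locating positive mass on $k$-sets disjoint from $L$; your own scenario (negative mass on $k$-sets meeting $L$ in $k-1$ points) shows that route stalls. Instead one argues by contradiction: assume that for every level $y\le k-2$ the count $D_y$ of nonnegative $t$-sets $Z$ with $|Z\cap L|=y$ is at most $\binom{n-k}{t-k}$. Then $\sum_{|Z\cap L|=y}\sum_{X\subset Z}w_X\le\binom{t}{k}D_y\le\binom{t}{k}\binom{n-k}{t-k}$, and double counting rewrites the left side as $\sum_{j\le y}\beta_{y,j}\Sigma_j$ with $\beta_{y,j}=\binom{k-j}{y-j}\binom{n-2k+j}{t-k-y+j}$. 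These $\beta_{y,j}$ are the entries of the lower-triangular matrix fed into Lemma~\ref{lemma_gauss}; the lemma is used not to ``read off counts'' but to write $b_j=\sum_{y\ge j}\gamma_y\beta_{y,j}$ with $0\le\gamma_y\le b_y/\beta_{y,y}$, which converts the upper bounds on $\sum_j\beta_{y,j}\Sigma_j$ into an upper bound $\sum_j b_j\Sigma_j\le\binom{t}{k}\binom{n-k}{t-k}\sum_y\gamma_y=O\!\left(\tfrac{t^{k+1}}{n}\right)\binom{n-k}{t-k}$, contradicting the display above when $n>Ct^{3k+3}$. So the variables in the linear system are the $\Sigma_j$, not the level counts $D_y$, and the system transfers upper bounds rather than extracting them.
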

\begin{proof}
We may rewrite the left hand side of the inequality as
\begin{align*}
&\binom{n-2k}{t-2k} \sum_{|X \cap L|=0} w_X + \binom{n-2k+1}{t-2k+1} \sum_{|X \cap L|=1} w_X + \cdots + \binom{n-k-1}{t-k-1} \sum_{|X \cap L|=k-1} w_X \\
&=\binom{n-k-1}{t-k-1} \sum_{|X \cap L| \le k-1} w_X - \sum_{j=0}^{k-2} \Big( b_j \cdot \sum_{|X \cap L|= j} w_X\Big).
\end{align*}
Here we let $b_j= \binom{n-k+1}{t-k+1}-\binom{n-2k+j}{t-2k+j}$. Note that $\sum_{|X \cap L| \le k-1} w_X=\sum_{X \ne L} w_X \ge -1$. Since $n>Ct^{3k+3}$, this implies
\begin{align} \label{cite_inequality}
\sum_{j=0}^{k-2} \Big(b_j \cdot \sum_{|X \cap L| = j} w_X\Big) &\ge \frac{1}{13t^{2k}} \binom{n-k}{t-k} - \binom{n-k-1}{t-k-1} \ge  \frac{1}{14t^{2k}} \binom{n-k}{t-k}.
\end{align}

For a fixed integer $0 \le y \le k-1$, denote by $D_y$ the number of nonnegative $t$-sets $Z$ with $|Z \cap L|=y$. If $D_y > \binom{n-k}{t-k}$ then we are done. Otherwise assume $D_y \le \binom{n-k}{t-k}$ for every $y$. We estimate the following sum:
$$\sum_{|Z \cap L|=y, |Z|=t} \sum_{X \subset Z} w_X .$$
Since every nonnegative $t$-set contributes to the sum at most $\binom{t}{k}$, it is at most
$\binom{t}{k} D_y \le \binom{t}{k}\binom{n-k}{t-k}$. By double counting, the above sum also equals
$\sum_{j=0}^y ( \beta_{y, j} \cdot \sum_{|X \cap L|=j} w_X ),$
where $\beta_{y, j}=\binom{k-j}{y-j}\binom{n-2k+j}{t-k-y+j}$, note that $\beta_{y, j}=0$ when $j<k+y-t$. When $j \ge k+y-t$, since $n \gg t$, for fixed $y$,
$\beta_{y, j}$ is increasing in $j$. Also note that $b_j$ is decreasing in $j$.
Let $\vec{\gamma}=(\gamma_0, \cdots, \gamma_{k-2})$ be the unique solution of the system
of equations $\vec{b} = \vec{\gamma} \cdot \beta$, then by Lemma \ref{lemma_gauss}
\begin{eqnarray*}
\sum_{j=0}^{k-2} \Big(b_j \cdot \sum_{|X \cap L| = j} w_X\Big) &=&
\sum_{j=0}^{k-2} \sum_{y=j}^{k-2} \beta_{y, j} \gamma_y\sum_{|X \cap L| = j} w_X
= \sum_{y=0}^{k-2} \gamma_y \cdot \sum_{j=0}^y \Big( \beta_{y, j} \cdot \sum_{|X \cap L|=j} w_X \Big)\\
 &\le& \binom{t}{k}\binom{n-k}{t-k}\sum_{y=0}^{k-2}  \gamma_y \leq
\binom{t}{k}\binom{n-k}{t-k}\sum_{y=0}^{k-2} \frac{b_y}{\beta_{y,y}} .
\end{eqnarray*}
Since $b_y/\beta_{y,y}=\left(\binom{n-k-1}{t-k-1}-\binom{n-2k+y}{t-2k+y}\right)/\binom{n-2k+y}{t-k} \le \binom{n-k-1}{t-k-1}/\binom{n-2k}{t-k}.$ We have
\begin{align*}
\sum_{j=0}^{k-2} \Big(b_j \cdot \sum_{|X \cap L| = j} w_X\Big) \le \binom{t}{k}\binom{n-k}{t-k} \cdot(k-1)\cdot \frac{\binom{n-k-1}{t-k-1}}{\binom{n-2k}{t-k}} \leq \frac{t^{k+1}}{n}\binom{n-k}{t-k}.
\end{align*}
For $n>Ct^{3k+3}$ this contradicts \eqref{cite_inequality}.
\end{proof}

We now assume that the $t^k$-th largest weight in $H$ is $w_P$ and consider several cases.
\begin{lemma}\label{lemma_largeweight}
If $w_P > \frac{1}{t^{2k}}$, there are more than $\binom{n-k}{t-k}$ nonnegative edges in the hypergraph $H$.
\end{lemma}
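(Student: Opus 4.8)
The plan is to follow the strategy of Lemma~\ref{lemma_case2}: identify a large family of $k$-sets carrying substantial weight, show that through each of them many $t$-sets are nonnegative, and then add up these counts with an inclusion--exclusion correction for the $t$-sets passing through two of the chosen $k$-sets. First, by Lemma~\ref{very_negative} I may assume the ``very negative'' situation does not occur, so that for \emph{every} $k$-set $L$ one has $\sum_{L\subset Y}\sum_{L\ne X\subset Y}w_X>-\frac{1}{13t^{2k}}\binom{n-k}{t-k}$; writing $s(Y):=\sum_{X\subset Y}w_X$ and using $w_L\le 1$, this is equivalent to $\sum_{L\subset Y}s(Y)>\big(w_L-\frac{1}{13t^{2k}}\big)\binom{n-k}{t-k}$, and we also have the global identity $\sum_Y s(Y)=0$. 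Let $\mathcal L$ be the set of the $t^{k}$ heaviest vertices; by hypothesis $w_L\ge w_P>t^{-2k}$ for each $L\in\mathcal L$, and since any two distinct vertices lie in at most $\binom{n-k-1}{t-k-1}=\tfrac{t-k}{n-k}\binom{n-k}{t-k}$ common edges, the total overlap in the inclusion--exclusion over $\mathcal L$ is at most $\binom{t^{k}}{2}\binom{n-k-1}{t-k-1}<\tfrac{1}{2Ct^{k+2}}\binom{n-k}{t-k}$, which is negligible.

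The core of the argument is a good lower bound on the number of nonnegative edges through each $L\in\mathcal L$. Writing an edge through $L$ as $L\cup Z$ with $Z\in\binom{[n]\setminus L}{t-k}$ and setting $g_L(Z):=s(L\cup Z)-w_L$, a negative edge through $L$ is exactly a $Z$ with $g_L(Z)<-w_L$, and $g_L$ splits into ``layers'' indexed by $j=|X\cap L|$, the layer $j$ contributing $\binom{n-2k+j}{t-2k+j}\cdot A_j$ upon summation over $Z$, where $A_j:=\sum_{|X\cap L|=j}w_X$. Because these codegrees are very unequal --- the top layers $j=k-1,k-2,\dots$ dominate --- one cannot argue as bluntly as in Lemma~\ref{lemma_case2}; instead, exactly as in the proof of Lemma~\ref{very_negative}, I would invert the resulting triangular system and use the Gaussian-elimination estimate of Lemma~\ref{lemma_gauss} to peel off the top layers, leaving only layers whose codegree with $L$ is at most $\binom{n-k-1}{t-k-1}$ and whose total contribution is therefore $O(t^{k+1}/n)\binom{n-k}{t-k}$ --- negligible once $n>Ct^{3k+3}$. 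Combining this with the lower bound $\sum_{L\subset Y}s(Y)>\big(w_L-\frac{1}{13t^{2k}}\big)\binom{n-k}{t-k}$, one expects that each $L\in\mathcal L$ lies in enough nonnegative edges that, summed over the $t^{k}$ members of $\mathcal L$ and diminished by the negligible overlap above, the total still exceeds $\binom{n-k}{t-k}$ --- which is exactly what we want.

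The hard part will be precisely this per-vertex count: the crude estimate ``each nonnegative edge contributes at most $\binom{t}{k}$ to $\sum_{L\subset Y}s(Y)$'' only produces about $\tfrac{1}{t^{2k}\binom{t}{k}}\binom{n-k}{t-k}$ nonnegative edges per heavy vertex, which is too weak by a factor of order $t^{k}\binom{t}{k}$, so everything hinges on extracting a genuinely \emph{uniform} (over $L\in\mathcal L$) ``positive proportion'' bound from the layered decomposition; this is where Lemma~\ref{lemma_gauss} and the hypothesis $n>Ct^{3k+3}$ must be pushed, and where the argument departs from the easier situation of Theorem~\ref{main_theorem}. If such a uniform per-vertex bound turns out to be awkward to obtain directly, an alternative is to argue by contradiction: assuming at most $\binom{n-k}{t-k}$ nonnegative edges forces $D_y(L)\le\binom{n-k}{t-k}$ for every $k$-set $L$ and every intersection type $y$, which --- when fed into the machinery of Lemma~\ref{very_negative} applied to several well-chosen vertices of $\mathcal L$ --- should again be inconsistent with the existence of $t^{k}$ vertices of weight exceeding $t^{-2k}$.
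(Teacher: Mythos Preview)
Your overall strategy---show that each of the $t^k$ heaviest vertices lies in at least $\frac{3}{2t^k}\binom{n-k}{t-k}$ nonnegative edges, then sum over these vertices and subtract the pairwise overlap---is exactly the paper's. The gap is precisely the per-vertex bound you flag as the ``hard part,'' and your proposed remedies (a layered decomposition inverted via Lemma~\ref{lemma_gauss}, or a contradiction argument feeding several $D_y(L)$ bounds back into the machinery of Lemma~\ref{very_negative}) are overcomplicated and not what is needed.

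The missing idea is much simpler, and in fact is the direct analogue of the step in Lemma~\ref{lemma_case2} that you thought could not be repeated. By the definition of $P$, all but at most $t^k$ of the weights satisfy $w_X\le w_P$. So when you upper bound the contribution of the nonnegative edges through a heavy vertex $L$, bound each $w_X$ by $w_P$ rather than by~$1$, and add a correction of at most $t^k\binom{n-k-1}{t-k-1}$ for the at most $t^k$ exceptional $X$ (each appearing in at most $\binom{n-k-1}{t-k-1}$ edges through $L$). Writing $S$ for the number of negative edges through $L$, this gives
\[
\sum_{L\subset Y}\ \sum_{L\ne X\subset Y} w_X \ \le\ -S\,w_L \ +\ \Bigl(\binom{n-k}{t-k}-S\Bigr)\Bigl(\binom{t}{k}-1\Bigr)w_P \ +\ t^k\binom{n-k-1}{t-k-1}.
\]
If $S\ge\bigl(1-\tfrac{3}{2t^k}\bigr)\binom{n-k}{t-k}$, then using $w_L\ge w_P$, $k\ge 2$ (so $\binom{t}{k}/t^k\le 1/k!\le 1/2$), and $n>Ct^{3k+3}$, the right side is at most $-\tfrac{1}{13}w_P\binom{n-k}{t-k}<-\tfrac{1}{13t^{2k}}\binom{n-k}{t-k}$, which triggers Lemma~\ref{very_negative} directly. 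Otherwise each heavy $L$ lies in at least $\tfrac{3}{2t^k}\binom{n-k}{t-k}$ nonnegative edges, and your inclusion--exclusion finishes. No triangular inversion is required here; Lemma~\ref{lemma_gauss} is invoked only inside the proof of Lemma~\ref{very_negative}, which you use as a black box.
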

\begin{proof}
We will show that every vertex whose weight is larger than $w_{P}$ is contained in at least $\frac{3}{2t^k} \binom{n-k}{t-k}$ nonnegative edges, otherwise there are already $>\binom{n-k}{t-k}$ nonnegative edges.
For simplicity we just need to prove this statement for $w_{P}$ itself. Suppose there are $S$ negative edges containing $w_{P}$, which are denoted by $e_1, \cdots, e_{S}$ (as $t$-subsets). And $e_{S+1}, \cdots, e_{\binom{n-k}{t-k}}$ are the other (thus nonnnegative) edges containing $w_P$. We have
\begin{align}
\sum_{i=1}^{\binom{n-k}{t-k}} \sum_{P \ne X \subset e_i} w_{X} &= \sum_{i=1}^{S} \sum_{P \ne X \subset e_i} w_X + \sum_{i=S+1}^{\binom{n-k}{t-k}} \sum_{P \ne X \subset e_i} w_{X}\nonumber\\
&\le -w_P \cdot S + w_P \cdot \left(\binom{n-k}{t-k}-S\right) \cdot \left(\binom{t}{k}-1\right) + t^k \cdot \binom{n-k-1}{t-k-1} \label{sum_estimate}
\end{align}
Here we used that there are at most $t^k$ sets $X$ whose weight is larger than $w_{P}$ (but always $\leq 1$), and the number of times every such set appears in the sum is at most $\binom{n-k-1}{t-k-1}$. If $S \ge (1-\frac{3}{2t^k}) \binom{n-k}{t-k}$, then the above expression is at most
\begin{align*}
&- \binom{n-k}{t-k} \left(\Big(1-\frac{3}{2t^k}\Big) w_P - \frac{3}{2t^k} \cdot \binom{t}{k} \cdot w_P - \frac{t^{k+1}}{n} \right),
\end{align*}
which can be further bounded by
$$- \binom{n-k}{t-k} \left( \Big(1-\frac{3}{2t^k}-\frac{3}{2 \cdot k!}\Big)w_P -\frac{t^{k+1}}{n}\right)
<- \binom{n-k}{t-k} \cdot \frac{1}{13}w_P \le -\binom{n-k}{t-k} \cdot \frac{1}{13t^{2k}}.$$
The last inequality uses that $n>Ct^{3k+3}$, $t>k \ge 2$ and therefore $1-\frac{3}{2t^k}-\frac{3}{2 \cdot k!} \ge 1-\frac{3}{2 \cdot 3^2} - \frac{3}{2 \cdot 2!}=\frac{1}{12}$. Since we also have $\sum_{X \ne P} w_X = -w_P \ge -1$,
Lemma \ref{very_negative} for $L=P$ immediately gives $>\binom{n-k}{t-k}$ nonnegative edges.

Therefore we can assume that for the $t^k$ sets with largest weights, the number of nonnegative edges containing each such set is at least $\frac{3}{2t^k} \binom{n-k}{t-k}$. Using the union bound, the number of nonnegative edges is at least
$$t^k \cdot \frac{3}{2t^k} \binom{n-k}{t-k} - \binom{t^k}{2} \binom{n-k-1}{t-k-1} ,$$
which is also larger than $\binom{n-k}{t-k}$.
\end{proof}

The next lemma covers the case when $w_P$ is smaller than $\frac{1}{t^{2k}}$, and there are significant number of  negative edges containing $\{1, \cdots, k\}$.

\begin{lemma}\label{lemma_smallweight}
If the $t^k$-th largest weight $w_{P}$ is smaller than $1/t^{2k}$, and there are less than $(1-\frac{1}{t^k}) \binom{n-k}{t-k}$ nonnegative edges containing $\{1, \cdots, k\}$, then there are at least $\binom{n-k}{t-k}$ nonnegative edges in $H$.
\end{lemma}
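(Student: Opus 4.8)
The plan is to reduce to Lemma~\ref{very_negative} applied with $L=\{1,\dots,k\}$, which I write $[k]$. Its second hypothesis comes for free here: since $\sum_X w_X=0$ and $w_{[k]}=1$ is the maximum weight, $\sum_{X\ne[k]}w_X=-w_{[k]}=-1\ge-1$. So the entire task is to verify
\[
\Phi\;:=\;\sum_{[k]\subset Y,\,|Y|=t}\ \sum_{[k]\ne X\subset Y}w_X\;\le\;-\frac{1}{13t^{2k}}\binom{n-k}{t-k}.
\]

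Writing $S_Y=\sum_{X\subset Y}w_X$ for the total weight of a $t$-set $Y$ and using $w_{[k]}=1$, we have $\Phi=\sum_{[k]\subset Y}(S_Y-1)=\big(\sum_{[k]\subset Y}S_Y\big)-\binom{n-k}{t-k}$. Splitting the $\binom{n-k}{t-k}$ sets $Y\supseteq[k]$ into negative ($S_Y<0$) and nonnegative ones and dropping the (nonpositive) negative contribution gives $\Phi\le\sum_{\text{nonneg }Y\supseteq[k]}S_Y-\binom{n-k}{t-k}$, so it suffices to bound $\sum_{\text{nonneg }Y\supseteq[k]}S_Y$ from above by a bit less than $\binom{n-k}{t-k}$.

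The key step is an upper bound on $S_Y$ for a single nonnegative $Y\supseteq[k]$. Let $Q$ be the set of $k$-sets of weight strictly larger than $w_P$; since $w_P$ is the $t^k$-th largest weight, $|Q|<t^k$, and $[k]\in Q$ because $w_{[k]}=1>t^{-2k}>w_P$. Every $k$-subset of $Y$ outside $Q$ has weight at most $\max(w_P,0)\le t^{-2k}$, while every $k$-subset has weight at most $1$, so $S_Y\le 1+(q_Y-1)+\binom{t}{k}t^{-2k}$, where $q_Y$ is the number of $k$-subsets of $Y$ lying in $Q$. Summing over the $D$ nonnegative $Y\supseteq[k]$: the term $w_{[k]}$ is counted $D$ times, the $t^{-2k}$ terms contribute at most $D\binom{t}{k}t^{-2k}$, and $\sum q_Y$ is the number of incidences between these $Y$ and members of $Q$, which is at most $D$ (coming from $[k]$ itself) plus $\sum_{X\in Q\setminus\{[k]\}}$(number of $t$-sets containing $X\cup[k]$), which is $<t^k\binom{n-k-1}{t-k-1}$. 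Altogether
\[
\sum_{\text{nonneg }Y\supseteq[k]}S_Y\;\le\;D\Big(1+\binom{t}{k}t^{-2k}\Big)+t^k\binom{n-k-1}{t-k-1}.
\]
Now I would plug in the hypothesis $D<(1-t^{-k})\binom{n-k}{t-k}$ together with $\binom{t}{k}\le t^k/2$ (as $k\ge2$), $\binom{n-k-1}{t-k-1}=\frac{t-k}{n-k}\binom{n-k}{t-k}$ and $n>Ct^{3k+3}$: routine arithmetic shows the first term is at most $(1-\tfrac12 t^{-k})\binom{n-k}{t-k}$ and the error term is at most $\tfrac14 t^{-k}\binom{n-k}{t-k}$, giving $\Phi<-\tfrac14 t^{-k}\binom{n-k}{t-k}\le-\tfrac{1}{13t^{2k}}\binom{n-k}{t-k}$. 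Lemma~\ref{very_negative} then yields more than $\binom{n-k}{t-k}$ nonnegative edges, which is more than enough.

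I expect the only real friction to be bookkeeping of two kinds. First, the members of $Q$ can each carry weight as large as $w_{[k]}=1$, so one must check that, summed over all $Y\supseteq[k]$, they only contribute the lower-order term $t^k\binom{n-k-1}{t-k-1}=O(t^{k+1}/n)\binom{n-k}{t-k}$ — this is exactly where $n>Ct^{3k+3}$ is used. Second, the threshold $(1-t^{-k})$ in the hypothesis is tuned so that, after the unavoidable blow-up factor $1+\binom{t}{k}t^{-2k}$, one still clears $\binom{n-k}{t-k}$ by a margin of order $t^{-k}$, so that margin must be tracked with care; the possibility $w_P<0$ is harmless, since then the $k$-subsets of $Y$ outside $Q$ only decrease $S_Y$.
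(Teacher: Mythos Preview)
Your proposal is correct and follows essentially the same route as the paper. Both arguments bound $\Phi=\sum_{[k]\subset Y}\sum_{X\subset Y,\,X\ne[k]}w_X$ by separating negative from nonnegative $t$-sets through $[k]$, controlling the nonnegative part via the $t^k$ ``large'' weights (each appearing in at most $\binom{n-k-1}{t-k-1}$ edges) plus the remaining weights capped by $t^{-2k}$, and then invoking Lemma~\ref{very_negative}; your version just packages the negative-edge contribution as ``drop $\sum_{\text{neg}}S_Y\le 0$ and keep the full $-\binom{n-k}{t-k}$'' rather than writing it as $-S$, which is algebraically the same inequality.
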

\begin{proof}
We consider all the $t$-tuples containing $\{1, \cdots, k\}$, similarly as before suppose there are $S \ge \frac{1}{t^k}\binom{n-k}{t-k}$ negative edges $e_1, \cdots, e_S$ and nonnegative edges $e_{S+1}, \cdots, e_{\binom{n-k}{t-k}}$, we get
\begin{align*}
\sum_{[k] \subset Z, |Z|=t} \sum_{X \subset Z, X \ne [k]} w_X &= \sum_{i=1}^{S} \sum_{X \subset e_i, X \ne [k]} w_X + \sum_{i=S+1}^{\binom{n-k}{t-k}} \sum_{X \subset e_i, X \ne [k]} w_X\\
&\le - S + \frac{1}{t^{2k}} \cdot \left(\binom{n-k}{t-k}-S\right) \cdot \left(\binom{t}{k}-1\right) + t^k \cdot \binom{n-k-1}{t-k-1}\\
& \le -\left(S - \frac{1}{k! \cdot t^{k}} \left(\binom{n-k}{t-k}-S \right) - t^k \binom{n-k-1}{t-k-1}\right)
\end{align*}
The first inequality is by bounding the $t^k$ largest weights in the second sum by $1$ and the rest by $\frac{1}{t^{2k}}$. It also
uses the fact that two sets are contained in at most $\binom{n-k-1}{t-k-1}$ edges.
Since $S \ge \frac{1}{t^k}\binom{n-k}{t-k}$, we have
$$\sum_{[k] \subset Z, |Z|=t} \sum_{X \subset Z, X \ne [k]} w_X \le -\left(\frac{1}{2t^k} \binom{n-k}{t-k} - t^k \binom{n-k-1}{t-k-1}\right)
\leq - \left(\frac{1}{2t^k} -\frac{t^{k+1}}{n} \right)\binom{n-k}{t-k}.$$
For large $n$ the right hand side is at most
$-\frac{1}{3t^k} \binom{n-k}{t-k}$. We also have
$\sum_{X \ne [k]} w_X = -w_{[k]}=-1$.
Now we once again can apply Lemma \ref{very_negative} for $L=\{1, \cdots, k\}$ to show the existence of $>\binom{n-k}{t-k}$ nonnegative edges.
\end{proof}

It remains to prove the case when $\{1, \cdots, k\}$ is contained in at least $(1-\frac{1}{t^k}) \binom{n-k}{t-k}$ nonnegative edges.
\begin{lemma}\label{lemma_last}
If $\{1, \cdots, k\}$ is contained in at least $(1-\frac{1}{t^k}) \binom{n-k}{t-k}$ nonnegative edges, then there are at least $\binom{n-k}{t-k}$ nonnegative edges in $H$.
\end{lemma}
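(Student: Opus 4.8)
The plan is to reduce the lemma to finding at least $\binom{n-k}{t-k}/t^k$ nonnegative edges that do not contain $[k]=\{1,\dots,k\}$. Writing $D$ for the number of \emph{negative} edges through $[k]$, the hypothesis says $D\le\binom{n-k}{t-k}/t^k$, and every nonnegative edge avoiding $[k]$ is distinct from the $\binom{n-k}{t-k}-D$ nonnegative edges through $[k]$; so $D$ such extra edges already give $\binom{n-k}{t-k}$ nonnegative edges altogether. If the $t^k$-th largest weight satisfies $w_P\ge 1/t^{2k}$ we are done by Lemma \ref{lemma_largeweight}, so assume $w_P<1/t^{2k}$, i.e.\ fewer than $t^k$ of the $k$-sets carry weight at least $1/t^{2k}$. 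If $D=0$ we are done, so fix a negative edge $T_0\supseteq[k]$, so that $f_{T_0}:=\sum_{X\subseteq T_0}w_X<0$ (sums being over $k$-subsets throughout); since $\sum_X w_X=0$ this forces $\sum_{X\not\subseteq T_0}w_X>0$, and I let $Q$ be a $k$-set of largest weight among those not contained in $T_0$, so that $0<w_Q\le 1$.

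Next I would treat the main case $w_Q\ge 1/t^{2k}$, which in particular covers the situation where some $k$-set of weight $\ge 1/t^{2k}$ lies outside $T_0$. Put $\mathcal Y=\{\,Y:|Y|=t,\ Q\subseteq Y,\ Y\cap(T_0\setminus Q)=\emptyset\,\}$. Then $|\mathcal Y|=\binom{n-|Q\cup T_0|}{t-k}\ge\big(1-\tfrac1{2t^k}\big)\binom{n-k}{t-k}$, and for every $Y\in\mathcal Y$ one has $Y\cap T_0\subseteq Q$, whence (i) $[k]\not\subseteq Y$, because $[k]\setminus Q$ is a nonempty subset of $T_0\setminus Q$, and (ii) every $k$-set $X\subseteq Y$ with $X\ne Q$ satisfies $X\not\subseteq T_0$, so $w_X\le w_Q$ by the maximality of $w_Q$. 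If at least $\binom{n-k}{t-k}/t^k$ members of $\mathcal Y$ are nonnegative, then by (i) we are done. Otherwise more than $\big(1-\tfrac3{2t^k}\big)\binom{n-k}{t-k}$ members of $\mathcal Y$ are negative, and I would bound $\Sigma_Q:=\sum_{Q\subseteq Y,\,|Y|=t}\sum_{Q\ne X\subseteq Y}w_X$ from above: each negative member of $\mathcal Y$ contributes less than $-w_Q$; each of the fewer than $\binom{n-k}{t-k}/t^k$ nonnegative members contributes at most $(\binom tk-1)w_Q$ by (ii); and the at most $t\binom{n-k-1}{t-k-1}$ sets $Y\supseteq Q$ with $Y\notin\mathcal Y$ contribute less than $\binom tk$ each. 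Since $t^k\ge 9$ and $k!\ge 2$, these estimates combine to $\Sigma_Q<\big(-\tfrac13 w_Q+2t^{k+2}/n\big)\binom{n-k}{t-k}\le-\tfrac1{13t^{2k}}\binom{n-k}{t-k}$, using $w_Q\ge 1/t^{2k}$ and $n>Ct^{3k+3}$. As $\sum_{X\ne Q}w_X=-w_Q\ge-1$, Lemma \ref{very_negative} applied with $L=Q$ then yields more than $\binom{n-k}{t-k}$ nonnegative edges.

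The step I expect to be the main obstacle is the remaining, degenerate case, in which \emph{every} $k$-set of weight $\ge 1/t^{2k}$ is contained in $T_0$ — so all the ``heavy'' weight lives among the at most $\binom tk$ $k$-subsets of a single negative $t$-set, while every other $k$-set has weight below $1/t^{2k}$ (but possibly very negative). Here the second-star trick fails because the best $Q$ available outside $T_0$ is too light for the inequality above. One then has to work inside $T_0$: from $D\le\binom{n-k}{t-k}/t^k$ only an $O(1/t^k)$ fraction of the $t$-sets meeting $T_0$ exactly in $[k]$ can be negative, which already constrains the weights on $k$-sets near $[k]$; and from $\sum_X w_X=0$ the number of $k$-sets of weight below $-2$ is at most roughly $\binom nk/t^{2k}$, so all but a negligible fraction of $t$-sets avoid them, and on those $t$-sets $f_Y$ agrees with $\sum_{X\subseteq Y\cap T_0}w_X$ up to a controlled error. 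Converting this structural information into the required supply of nonnegative $t$-sets $Y$ with $[k]\not\subseteq Y$ — in effect an MMS-type argument restricted to $T_0$, fused with the control of the light noise — is the delicate part, and I expect it to require the bulk of the work.
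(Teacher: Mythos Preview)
Your main case ($w_Q\ge 1/t^{2k}$) is correct and in spirit close to the paper's argument: both find a ``second star'' around a largest-weight $k$-set $Q$ outside the negative edge $T_0$, and both invoke Lemma~\ref{very_negative} with $L=Q$ when that star has too few nonnegative edges. Your device of restricting to $\mathcal Y$ (edges through $Q$ disjoint from $T_0\setminus Q$) is a perfectly valid way to guarantee that all $k$-subsets of $Y$ other than $Q$ have weight at most $w_Q$.

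The genuine gap is the degenerate case $w_Q<1/t^{2k}$, which you explicitly leave open. Your sketch (an ``MMS-type argument restricted to $T_0$, fused with control of the light noise'') is not a proof, and it is not clear it can be made into one: the set $T_0$ has only $t$ elements and $\binom{t}{k}$ $k$-subsets, so there is no asymptotic room to run an MMS argument there; meanwhile the ``light'' weights outside $T_0$, though individually below $1/t^{2k}$, can be arbitrarily negative, and your bound on how many are below $-2$ does not by itself produce nonnegative $t$-sets avoiding $[k]$. The obstacle is real: in your estimate for $\Sigma_Q$ the main term $-\tfrac13 w_Q$ collapses below the threshold $1/(13t^{2k})$ required by Lemma~\ref{very_negative}, and no amount of restricting to $\mathcal Y$ fixes that, since the contribution of the $Y\notin\mathcal Y$ does not scale with $w_Q$.

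The paper avoids this case split altogether with a single reweighting. Having chosen $Q$ of maximal weight among $X\not\subseteq T_0$, it sets
\[
w'_X=\begin{cases}-\binom{t}{k}&\text{if }X\subseteq T_0,\\ w_X/w_Q&\text{otherwise,}\end{cases}
\]
so that $w'_Q=1$ is the global maximum of $w'$, \emph{regardless of how small $w_Q$ is}. One then bounds $\sum_{Q\subset Y}\sum_{X\ne Q}w'_X$ directly: if $S\ge(1-\tfrac{3}{2t^k})\binom{n-k}{t-k}$ edges through $Q$ are negative for $w'$, the sum is at most $-\tfrac1{12}\binom{n-k}{t-k}$; since $\sum_{X\ne Q}w'_X\ge -t^{2k}$, Lemma~\ref{very_negative} applied to $w'/t^{2k}$ yields more than $\binom{n-k}{t-k}$ edges nonnegative for $w'$. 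The point of the value $-\binom{t}{k}$ is that any such edge cannot contain a $k$-subset of $T_0$ (otherwise its $w'$-sum would be at most $(\binom{t}{k}-1)-\binom{t}{k}<0$), hence on it $w'$ and $w$ differ only by the positive factor $1/w_Q$, and the edge is nonnegative for $w$ as well. This single device replaces both your restriction to $\mathcal Y$ and your missing degenerate-case analysis. (Your preliminary reduction via Lemma~\ref{lemma_largeweight} to $w_P<1/t^{2k}$ is then unnecessary.)
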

\begin{proof}
Note that if every edge containing $\{1, \cdots, k\}$ is nonnegative, this already gives $\binom{n-k}{t-k}$ nonnegative edges and the lemma is proved. So we may assume that there is a negative edge $f$ (as $t$-subset) through $\{1, \cdots, k\}$ with $\sum_{X \subset f} w_X <0$. Suppose the largest weight outside the edge $f$ is $w_Q$, where $|Q \cap f| \le k-1$.
Now we define new weights $w'$, such that
\begin{align*}
w'_X=
\begin{cases}
-\binom{t}{k}   & \text{if } X \subset f \\
w_X/w_Q        & \text{otherwise.}
\end{cases}
\end{align*}
Then for every $X \not \subset f$, $w'_X \le 1$ and $w'_Q=1$. Now we consider all the $\binom{n-k}{t-k}$ $t$-tuples containing the $k$-set $Q$. As usual, assume that $S$ of them has negative sum according to $w'$. If $S \ge (1-\frac{3}{2t^k})\binom{n-k}{t-k}$, we have the following estimate:
\begin{align*}
\sum_{Q \subset Y, |Y|=t} \sum_{X \subset Y, X \ne Q} w'_X & \le -S + \left(\binom{n-k}{t-k}-S \right) \cdot \binom{t}{k} \le -\binom{n-k}{t-k} \cdot \left(1-\frac{3}{2t^k}-\frac{3}{2t^k}\binom{t}{k}\right)\\
& \le -\binom{n-k}{t-k} \cdot \left(1-\frac{3}{2t^k}-\frac{3}{2k!}\right) \le - \frac{1}{12}\binom{n-k}{t-k}.
\end{align*}
Note that since $\sum_{X \subset f} w_X <0$, we have
\begin{align*}
\sum_{X \ne Q} w'_X &= \sum_{X \ne Q} w_X/w_Q - \sum_{X \subset f} \left(w_X/w_Q+\binom{t}{k}\right) \ge -1 - \binom{t}{k}^2 \ge -t^{2k}.
\end{align*}
If we apply lemma \ref{very_negative} for $L=Q$ and the weight $w''_X=w'_X/t^{2k}$, we get $>\binom{n-k}{t-k}$ nonnegative edges for the new weight function $w'$. Note that every such nonnegative edge can not share with $f$ a common $k$-subset, otherwise its total weight is at most $(\binom{t}{k}-1) - \binom{t}{k} < 0$. Hence these nonnegative edges are also nonnegative edges for the original weight function $w$.

By the above discussion, it remains to consider the case $S<(1-\frac{3}{2t^k})\binom{n-k}{t-k}$. Then there are at least $\frac{3}{2t^k} \binom{n-k}{t-k}$ nonnegative edges containing $Q$, together with the $(1-\frac{1}{t^k})\binom{n-k}{t-k}$ nonnegative edges containing $\{1, \cdots,  k\}$. Since $\{1, \cdots, k \}$ and $w_Q$ have codegree at most $\binom{n-k-1}{t-k-1} < \frac{1}{2t^k} \binom{n-k}{t-k}$, we have in total more than $\binom{n-k}{t-k}$ nonnegative edges.
\end{proof}

\subsection{Generalization of vector MMS} \label{subsection_vectormms}
Our techniques from the previous section also allow us to prove a generalization of the vector space version of Manickam-Mikl\'os-Singhi conjecture.
Since the proof of this result is very similar to that of Theorem \ref{theorem_k-tuple} we only state the appropriate variants of
the lemmas involved. The detailed proofs of these lemmas can be found in the appendix of this paper. The proof of Theorem \ref{theorem_k-tuple_vector} follows immediately from combining these lemmas.
As before, we define the hypergraph $H$ to have the vertex set ${V \brack k}$ and every edge corresponds to a $t$-dimensional subspace. It is easy to check that the hypergraph is ${t \brack k}_q$-uniform on ${n \brack k}_q$ vertices. Like the previous section, we also assume that $[k]$ is the $k$-dimensional subspace with $w_{[k]}=1$ and for every $X$, $w_X \le 1$. All the following lemmas are proven under the assumption that $n>C(t-k)k$ for sufficiently large constant $C$.


\begin{lemma}\label{very_negative_vector}
If for some $k$-dimensional subspace $L$,
$$\sum_{L \subset Y, Y \in {V \brack t}} \sum_{L \ne X \subset Y} w_X \le -\frac{1}{24 {t \brack k}_q^2} {n-k \brack t-k}_q,$$
and $\sum_{X \ne L} w_X \ge -1$, then there are more than ${n-k \brack t-k}_q$ nonnegative edges in $H$.
\end{lemma}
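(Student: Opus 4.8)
The plan is to mimic, almost verbatim, the proof of Lemma \ref{very_negative} (its set-system analogue), replacing binomial coefficients with $q$-Gaussian binomial coefficients throughout and using that $n > C(t-k)k$ plays the role of $n > Ct^{3k+3}$. First I would expand the left-hand side of the hypothesis by grouping edges $X$ according to $\dim(X \cap L) = j$ for $0 \le j \le k-1$: the number of $t$-dimensional $Y$ with $L \subset Y$ containing a fixed $X$ with $\dim(X\cap L)=j$ is a $q$-binomial coefficient of the form ${n-k-(\dim X - j)\brack t - \dim X}_q$ or similar, so the double sum becomes a linear combination $\sum_{j=0}^{k-1} c_j \sum_{\dim(X\cap L)=j} w_X$ with explicit positive coefficients $c_j$. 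Isolating the $j=k-1$ term (whose coefficient ${n-k-1 \brack t-k-1}_q$ is the smallest in the relevant sense) and using $\sum_{X \ne L} w_X = \sum_{\dim(X\cap L)\le k-1} w_X \ge -1$, I would derive the analogue of \eqref{cite_inequality}, namely $\sum_{j=0}^{k-2} b_j \sum_{\dim(X\cap L)=j} w_X \ge \frac{1}{\mathrm{poly}} {n-k\brack t-k}_q$ for appropriate $b_j = {n-k+1 \brack t-k+1}_q - {n-2k+j \brack t-2k+j}_q$.

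Next I would introduce $D_y$, the number of nonnegative $t$-subspaces $Z$ with $\dim(Z\cap L)=y$, assume for contradiction each $D_y \le {n-k\brack t-k}_q$, and double-count $\sum_{\dim(Z\cap L)=y} \sum_{X \subset Z} w_X$ in two ways: it is at most ${t\brack k}_q D_y \le {t\brack k}_q {n-k\brack t-k}_q$ (each nonnegative edge contributes at most ${t\brack k}_q$ subspaces $X$), and it also equals $\sum_{j=0}^y \beta_{y,j} \sum_{\dim(X\cap L)=j} w_X$ where $\beta_{y,j}$ is the $q$-analogue of $\binom{k-j}{y-j}\binom{n-2k+j}{t-k-y+j}$ — namely the product of the number of $X$ with $L \cap X$ fixed of dimension $j$ inside $Z$, times the number of such $Z$. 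The key structural point is that, for $n$ large compared to $t$, $\beta_{y,j}$ is increasing in $j$ (for $j$ in the valid range) and $b_j$ is decreasing in $j$, so the lower-triangular matrix $\beta = \{\beta_{y,j}\}$ satisfies the hypotheses of Lemma \ref{lemma_gauss}. Applying that lemma to $\vec b = \vec\gamma \cdot \beta$ gives $0 \le \gamma_y \le b_y/\beta_{y,y}$, and then
\[
\sum_{j=0}^{k-2} b_j \sum_{\dim(X\cap L)=j} w_X = \sum_{y=0}^{k-2} \gamma_y \sum_{j=0}^y \beta_{y,j}\sum_{\dim(X\cap L)=j} w_X \le {t\brack k}_q {n-k\brack t-k}_q \sum_{y=0}^{k-2} \frac{b_y}{\beta_{y,y}}.
\]
Bounding each ratio $b_y/\beta_{y,y}$ by ${n-k-1\brack t-k-1}_q / {n-2k\brack t-k}_q$, which is of order $q^{-(n - O(t))}$ and hence tiny when $n > C(t-k)k$, yields an upper bound of the form $\frac{1}{\mathrm{poly}(n)}{n-k\brack t-k}_q$, contradicting the lower bound from the previous paragraph. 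Hence some $D_y > {n-k\brack t-k}_q$, giving the conclusion.

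The main obstacle, and the place requiring the most care, is verifying the monotonicity of $\beta_{y,j}$ in $j$ and computing the decay rate of $b_y/\beta_{y,y}$ in the $q$-analogue setting: ratios of Gaussian binomial coefficients behave like powers of $q$ with exponents that are differences of the top and bottom parameters, and one must check that over the relevant range ($k + y - t \le j \le y$, where $\beta_{y,j} = 0$ below this range) the exponent differences indeed have the right sign, and that the net gain $b_y/\beta_{y,y} = O(q^{-(n-2k-(t-k))}) = O(q^{-(n-k-t)})$ is small enough that $k \cdot {t\brack k}_q \cdot q^{-(n-k-t)} < \frac{1}{24 {t\brack k}_q^2}$ precisely when $n > C(t-k)k$. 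Once these $q$-arithmetic estimates are pinned down, the logical skeleton is identical to Lemma \ref{very_negative}, so I would present the proof by explicitly recording the values of $c_j$, $b_j$, $\beta_{y,j}$, stating the two monotonicity facts with a one-line justification via exponent comparison, and then reproducing the double-counting and Lemma \ref{lemma_gauss} argument line for line.
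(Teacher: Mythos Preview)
Your proposal is correct and follows essentially the same route as the paper's own proof: expand by $\dim(X\cap L)$, isolate the top term to get the analogue of \eqref{cite_inequality}, introduce $D_y$, double-count, and apply Lemma~\ref{lemma_gauss} to reach a contradiction. The one place to be careful is that $\beta_{y,j}$ is \emph{not} literally ${k-j\brack y-j}_q{n-2k+j\brack t-k-y+j}_q$ but rather carries an extra factor $q^{(k-y)(t-k-y+j)}$ (coming from counting $(t-k-y+j)$-subspaces in a quotient that must avoid a fixed $(k-y)$-subspace); the paper computes this explicitly and shows $\beta_{y,j}\approx q^{(k-y)(y-j)+(n-t)(t-k+j-y)}$, from which both the monotonicity in $j$ and the bound $\beta_{y,y}\ge q^{(n-t)(t-k)}$ follow immediately --- exactly the verification you flagged as the main obstacle.
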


We now assume that the $3 {t \brack k}_q$-th largest weight in $H$ is $w_P$, and consider the following several cases.
\begin{lemma}\label{lemma_largeweight_vector}
If $w_P > 1/(4{t \brack k}_q^2)$, there are more than ${n-k \brack t-k}_q$ nonnegative edges in $H$.
\end{lemma}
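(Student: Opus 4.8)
The plan is to mirror the argument of Lemma \ref{lemma_largeweight}, substituting the Gaussian binomial coefficients for the ordinary ones and tracking the new constants. First I would show that every vertex (a $k$-dimensional subspace) whose weight is at least $w_P$ lies in at least $\frac{3}{2}{t \brack k}_q^{-1}{n-k \brack t-k}_q$ nonnegative edges; as in the set case it suffices to prove this for $P$ itself. Denote by $S$ the number of negative $t$-dimensional subspaces containing $P$, and bound the double sum $\sum_{P\subset Y,\,Y\in{V\brack t}}\sum_{P\ne X\subset Y}w_X$ from above. The negative edges each contribute at most $-w_P$; the nonnegative ones contribute at most $w_P\bigl({t\brack k}_q-1\bigr)$ from the at-most-$(3{t\brack k}_q-1)$ large weights bounded by $1$... more precisely, there are at most $3{t\brack k}_q$ subspaces $X$ with $w_X\ge w_P$, each appearing in at most ${n-k-1\brack t-k-1}_q$ of the relevant edges, so these contribute at most $3{t\brack k}_q{n-k-1\brack t-k-1}_q$. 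Since ${n-k-1\brack t-k-1}_q/{n-k\brack t-k}_q$ is of order $q^{-(n-t)}$, which is tiny under $n>C(t-k)k$, this term is negligible.

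Next, assuming $S\ge\bigl(1-\tfrac32{t\brack k}_q^{-1}\bigr){n-k\brack t-k}_q$, I would collect the estimates to get
\begin{align*}
\sum_{P\subset Y}\sum_{P\ne X\subset Y}w_X
\le -{n-k\brack t-k}_q\left(\Bigl(1-\tfrac{3}{2{t\brack k}_q}\Bigr)w_P - \tfrac{3}{2{t\brack k}_q}\Bigl({t\brack k}_q-1\Bigr)w_P - o(1)\right).
\end{align*}
Using ${t\brack k}_q\ge\binom{t}{k}\ge 3$ (since $t>k\ge 2$ and $q\ge 2$) one checks $1-\tfrac{3}{2{t\brack k}_q}-\tfrac{3({t\brack k}_q-1)}{2{t\brack k}_q}\ge\text{a positive constant}$, so the right-hand side is at most $-\tfrac{1}{c}w_P{n-k\brack t-k}_q$ for an absolute constant $c$, and since $w_P>1/(4{t\brack k}_q^2)$ this is at most $-\tfrac{1}{24{t\brack k}_q^2}{n-k\brack t-k}_q$. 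Together with $\sum_{X\ne P}w_X=-w_P\ge -1$, Lemma \ref{very_negative_vector} applied with $L=P$ then yields more than ${n-k\brack t-k}_q$ nonnegative edges, and we are done in this subcase.

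Finally, if no vertex of weight $\ge w_P$ triggers the above, then each of the $3{t\brack k}_q$ largest-weight subspaces lies in at least $\tfrac{3}{2}{t\brack k}_q^{-1}{n-k\brack t-k}_q$ nonnegative edges; a union bound, subtracting the pairwise overlaps (each at most ${n-k-1\brack t-k-1}_q={o(1)}\cdot{n-k\brack t-k}_q$), gives
\begin{align*}
3{t\brack k}_q\cdot\tfrac{3}{2{t\brack k}_q}{n-k\brack t-k}_q - \binom{3{t\brack k}_q}{2}{n-k-1\brack t-k-1}_q > {n-k\brack t-k}_q,
\end{align*}
since the first term is $\tfrac{9}{2}{n-k\brack t-k}_q$ and the subtracted term is $o\bigl({n-k\brack t-k}_q\cdot{t\brack k}_q^2\bigr)$, which is still small relative to ${n-k\brack t-k}_q$ when $n>C(t-k)k$ with $C$ large. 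The main obstacle, as in the set case, is bookkeeping the constants so that the coefficient of $w_P$ in the key estimate stays bounded away from zero and the error terms (controlled by ratios of Gaussian binomials, hence by powers of $q^{-(n-t)}$) are genuinely negligible under the hypothesis $n>C(t-k)k$; I expect these $q$-binomial ratio bounds to be the one genuinely new ingredient relative to the proof of Lemma \ref{lemma_largeweight}.
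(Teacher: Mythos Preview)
Your overall strategy is the paper's, but the constant you carried over from the set case is fatal. In Lemma \ref{lemma_largeweight} the key inequality was
\[
1-\frac{3}{2t^k}-\frac{3}{2t^k}\binom{t}{k}\ge 1-\frac{3}{2t^k}-\frac{3}{2k!}\ge \frac{1}{12},
\]
which works because $\binom{t}{k}/t^k\le 1/k!$ gives a genuine gap. In the vector setting you have replaced both $t^k$ and $\binom{t}{k}$ by the same quantity ${t\brack k}_q$, so your claimed positive constant is
\[
1-\frac{3}{2{t\brack k}_q}-\frac{3({t\brack k}_q-1)}{2{t\brack k}_q}
=1-\frac{3{t\brack k}_q}{2{t\brack k}_q}
=-\frac{1}{2},
\]
which is negative. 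Your displayed upper bound on $\sum_{P\subset Y}\sum_{P\ne X\subset Y}w_X$ is therefore a \emph{positive} number, and Lemma \ref{very_negative_vector} cannot be invoked.

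The paper fixes exactly this by using the threshold $\frac{1}{2{t\brack k}_q}$ rather than $\frac{3}{2{t\brack k}_q}$: if $S\ge\bigl(1-\tfrac{1}{2{t\brack k}_q}\bigr){n-k\brack t-k}_q$, then the coefficient of $w_P$ becomes
\[
1-\frac{1}{2{t\brack k}_q}-\frac{1}{2{t\brack k}_q}\cdot{t\brack k}_q
=\frac{1}{2}-\frac{1}{2{t\brack k}_q}\ge \frac{1}{3},
\]
using ${t\brack k}_q\ge 7$ for $t>k\ge 2$, $q\ge 2$. This yields the bound $-\tfrac{1}{12}{t\brack k}_q^{-2}{n-k\brack t-k}_q$ needed for Lemma \ref{very_negative_vector}. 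The union bound at the end still succeeds because $3{t\brack k}_q$ vertices each in $\tfrac{1}{2{t\brack k}_q}{n-k\brack t-k}_q$ nonnegative edges already give $\tfrac{3}{2}{n-k\brack t-k}_q$ before subtracting overlaps. So the only repair needed is to change your $\tfrac{3}{2}$ to $\tfrac{1}{2}$ throughout; the rest of your outline matches the paper.
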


\begin{lemma}\label{lemma_smallweight_vector}
If $w_{P} \le 1/(4{t \brack k}_q^2)$, and there are less than $(1-\frac{1}{2{t \brack k}_q}) {n-k \brack t-k}_q$ nonnegative edges containing $[k]$, then there are at least ${n-k \brack t-k}_q$ nonnegative edges in $H$.
\end{lemma}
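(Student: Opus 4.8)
The plan is to mirror, in the vector-space setting, the proof of Lemma~\ref{lemma_smallweight} from the set case, replacing binomial coefficients by Gaussian binomials throughout and keeping track of the extra constant factors that appear in the vector analogues of Lemmas~\ref{very_negative_vector} and~\ref{lemma_largeweight_vector}. First I would set $S$ to be the number of negative edges containing the fixed $k$-subspace $[k]$; by hypothesis $S \ge \frac{1}{2{t \brack k}_q}{n-k \brack t-k}_q$. I would then estimate the double sum $\sum_{[k]\subset Z,\, Z\in{V\brack t}}\sum_{X\subset Z,\, X\ne[k]} w_X$ from above by splitting the edges into the $S$ negative ones and the remaining nonnegative ones. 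For a negative edge $e$ through $[k]$ we have $\sum_{X\subset e,\,X\ne[k]} w_X < -w_{[k]} = -1$; for a nonnegative edge, bounding the at most $3{t\brack k}_q$ largest weights by $1$ and all the others by $w_P \le 1/(4{t\brack k}_q^2)$, and using that two $k$-subspaces lie in at most ${n-k-1\brack t-k-1}_q$ common $t$-subspaces, I get a bound of the same shape as in the set case.

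The key steps, in order, are: (1) write the double-counting identity for $\sum_{[k]\subset Z}\sum_{X\subset Z, X\ne[k]} w_X$; (2) bound the negative-edge contribution by $-S$ and the nonnegative-edge contribution by $\frac{1}{4{t\brack k}_q^2}\big({n-k\brack t-k}_q - S\big)\big({t\brack k}_q-1\big) + 3{t\brack k}_q\cdot{n-k-1\brack t-k-1}_q$; (3) combine these, use $S \ge \frac{1}{2{t\brack k}_q}{n-k\brack t-k}_q$ together with the crude estimate ${t\brack k}_q \le q^{k(t-k)}$-type bounds to absorb the quadratic term, and use $n > C(t-k)k$ so that the ratio ${n-k-1\brack t-k-1}_q/{n-k\brack t-k}_q$ is tiny (of order $q^{-(n-t)}$), thereby obtaining that the double sum is at most $-\frac{1}{c\,{t\brack k}_q}{n-k\brack t-k}_q$ for some absolute constant $c$ (say $c = 3$); (4) observe that trivially $\sum_{X\ne[k]} w_X = -w_{[k]} = -1 \ge -1$, so the hypotheses of Lemma~\ref{very_negative_vector} are met with $L = [k]$ — I would just need to check $\frac{1}{c\,{t\brack k}_q} \ge \frac{1}{24\,{t\brack k}_q^2}$, which holds since ${t\brack k}_q \ge q+1 > 8$ already forces $24\,{t\brack k}_q \ge c$ — and conclude that $H$ has more than ${n-k\brack t-k}_q$ nonnegative edges, which is even stronger than claimed.

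The main obstacle I anticipate is purely bookkeeping: getting the constants to line up so that the final bound on the double sum is genuinely below the threshold $-\frac{1}{24{t\brack k}_q^2}{n-k\brack t-k}_q$ demanded by Lemma~\ref{very_negative_vector}, while simultaneously controlling the ``error'' term $3{t\brack k}_q{n-k-1\brack t-k-1}_q$. The latter is where the hypothesis $n > C(t-k)k$ is used — one needs $\frac{{n-k-1\brack t-k-1}_q}{{n-k\brack t-k}_q} = \frac{q^{t-k}-1}{q^{n-k}-1} \le q^{-(n-t)}$ to be smaller than, say, $\frac{1}{12{t\brack k}_q^2}$, and since ${t\brack k}_q \le q^{(t-k)k}$ and $n - t \ge n - t > C(t-k)k - (t-k) \ge (C-1)(t-k)k$, this follows for $C$ large. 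I would present step (3) as a short chain of inequalities rather than grinding through exact Gaussian-binomial manipulations, citing the elementary bounds $q^{(t-k)k} \le {t\brack k}_q \le q^{(t-k)(k+1)}$ and ${n-k-1\brack t-k-1}_q \le q^{(t-k)(n-t)} \cdot \text{(stuff)} \cdot {n-k\brack t-k}_q$-type estimates. Once Lemma~\ref{very_negative_vector} applies, the conclusion is immediate, so there is no deeper difficulty than careful constant-chasing.
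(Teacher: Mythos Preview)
Your proposal is correct and follows essentially the same approach as the paper's proof: split the double sum over edges through $[k]$ into negative and nonnegative contributions, bound the nonnegative part using $w_P \le 1/(4{t\brack k}_q^2)$ together with the $3{t\brack k}_q$ exceptional large weights via the codegree bound ${n-k-1\brack t-k-1}_q$, derive a bound of the form $-\frac{1}{c{t\brack k}_q}{n-k\brack t-k}_q$ (the paper obtains $c=5$), and then invoke Lemma~\ref{very_negative_vector} with $L=[k]$. The constant-chasing details you flag are precisely those the paper handles, with the same elementary Gaussian-binomial estimates.
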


\begin{lemma}\label{lemma_last_vector}
If $[k]$ is contained in at least $(1-\frac{1}{2{t \brack k}_q}){n-k \brack t-k}_q$ nonnegative edges, then there are at least ${n-k \brack t-k}_q$ nonnegative edges in $H$.
\end{lemma}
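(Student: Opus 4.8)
The plan is to mirror the proof of Lemma~\ref{lemma_last}, replacing $\binom{t}{k}$, $\binom{n-k}{t-k}$ and $\binom{n-k-1}{t-k-1}$ by their $q$-analogues ${t \brack k}_q$, ${n-k \brack t-k}_q$ and ${n-k-1 \brack t-k-1}_q$, and invoking Lemma~\ref{very_negative_vector} in place of Lemma~\ref{very_negative}. First, if every $t$-dimensional subspace containing $[k]$ is a nonnegative edge, these already give ${n-k \brack t-k}_q$ nonnegative edges and we are done. So I may fix a $t$-dimensional subspace $f\supseteq[k]$ with $\sum_{X\subset f}w_X<0$. Since the weights sum to $0$, this forces some $k$-subspace outside $f$ to have positive weight; let $Q$ be a $k$-dimensional subspace with $Q\not\subset f$ of largest weight $w_Q>0$.

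Next I would reweight: set $w'_X=-{t \brack k}_q$ when $X\subset f$ and $w'_X=w_X/w_Q$ otherwise. By maximality of $w_Q$ among $k$-subspaces outside $f$ we get $w'_X\le 1$ for every $X$, and $w'_Q=1$. The crucial observation, exactly as in the set case, is that no $t$-subspace $Y$ that is a nonnegative edge for $w'$ can contain a $k$-subspace lying inside $f$: such an $X$ would contribute $-{t \brack k}_q$ while the other ${t \brack k}_q-1$ summands contribute at most ${t \brack k}_q-1$, making $\sum_{X\subset Y}w'_X\le -1<0$. Hence on every $w'$-nonnegative edge $Y$ all weights have the form $w_X/w_Q$, so $\sum_{X\subset Y}w'_X=\tfrac1{w_Q}\sum_{X\subset Y}w_X$ and thus every $w'$-nonnegative edge is automatically $w$-nonnegative; it therefore suffices to produce enough $w'$-nonnegative edges. (Note also that such an edge cannot contain $[k]\subset f$, so these edges are disjoint from the edges through $[k]$.)

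Now I would count the negative edges through $Q$ with respect to $w'$; call their number $S$, out of the ${n-k \brack t-k}_q$ edges through $Q$. If $S\ge\bigl(1-\tfrac{3}{4{t \brack k}_q}\bigr){n-k \brack t-k}_q$, then estimating $\sum_{Q\subset Y}\sum_{Q\ne X\subset Y}w'_X$ — each negative edge contributes less than $-w'_Q=-1$ and each nonnegative edge at most ${t \brack k}_q-1$ — gives the bound $-\tfrac14{n-k \brack t-k}_q$. On the other hand, using $\sum_{X\subset f}w_X<0$ one computes $\sum_{X\ne Q}w'_X=-1-{t \brack k}_q^2-\tfrac1{w_Q}\sum_{X\subset f}w_X\ge -2{t \brack k}_q^2$, since the last term is nonnegative. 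Hence the scaled weighting $w''=w'/(2{t \brack k}_q^2)$ satisfies $w''_X\le 1$, $\sum_{X\ne Q}w''_X\ge -1$ and $\sum_{Q\subset Y}\sum_{Q\ne X\subset Y}w''_X\le -\tfrac1{8{t \brack k}_q^2}{n-k \brack t-k}_q\le -\tfrac1{24{t \brack k}_q^2}{n-k \brack t-k}_q$, so Lemma~\ref{very_negative_vector} applied with $L=Q$ yields more than ${n-k \brack t-k}_q$ edges nonnegative for $w''$, hence for $w'$, hence for $w$. Otherwise $S<\bigl(1-\tfrac{3}{4{t \brack k}_q}\bigr){n-k \brack t-k}_q$, so there are at least $\tfrac{3}{4{t \brack k}_q}{n-k \brack t-k}_q$ $w'$-nonnegative (hence $w$-nonnegative) edges through $Q$; combining these with the $\bigl(1-\tfrac1{2{t \brack k}_q}\bigr){n-k \brack t-k}_q$ $w$-nonnegative edges through $[k]$ given by hypothesis, and subtracting the codegree of the distinct $k$-subspaces $[k]$ and $Q$ (which span dimension $\ge k+1$, so this codegree is at most ${n-k-1 \brack t-k-1}_q<\tfrac1{4{t \brack k}_q}{n-k \brack t-k}_q$ once $n>C(t-k)k$, since ${n-k-1 \brack t-k-1}_q/{n-k \brack t-k}_q=(q^{t-k}-1)/(q^{n-k}-1)$ is tiny in that range), yields at least ${n-k \brack t-k}_q$ nonnegative edges.

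The structure is identical to Lemma~\ref{lemma_last}; the only genuine work is the bookkeeping with the $q$-binomials, and the main thing to get right is the numerology: one must check that the threshold $\tfrac1{2{t \brack k}_q}$ coming from Lemma~\ref{lemma_smallweight_vector} and the threshold $\tfrac{3}{4{t \brack k}_q}$ used here add up past $1$ after subtracting the codegree term, that the scaling factor $2{t \brack k}_q^2$ is large enough to force $\sum_{X\ne Q}w''_X\ge -1$ — which is exactly where $\sum_{X\subset f}w_X<0$ enters, since it makes the term $-\tfrac1{w_Q}\sum_{X\subset f}w_X$ nonnegative — and that the codegree ratio ${n-k-1 \brack t-k-1}_q/{n-k \brack t-k}_q$ is negligible for $n>C(t-k)k$. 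I do not expect any obstacle beyond these routine estimates; the constants above can be adjusted to fit the exact statements of the auxiliary lemmas.
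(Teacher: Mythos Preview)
Your proof is correct and follows essentially the same approach as the paper's own proof: the reweighting $w'$, the application of Lemma~\ref{very_negative_vector} after scaling by $2{t\brack k}_q^2$, and the final codegree comparison are all identical in structure. The only differences are cosmetic choices of constants (you split at $\tfrac{3}{4{t\brack k}_q}$ and obtain $-\tfrac14{n-k\brack t-k}_q$, whereas the paper splits at $\tfrac{2}{3{t\brack k}_q}$ and obtains $-\tfrac1{12}{n-k\brack t-k}_q$); your observation that the $w'$-nonnegative edges through $Q$ cannot contain $[k]\subset f$ in fact makes the final codegree subtraction unnecessary, though including it does no harm.
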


\section{Concluding Remarks}
A $r \textrm{--}(n,t,\lambda)$ block design is a collection of $t$-subsets of $[n]$ such that every $r$ elements are contained in exactly $\lambda$ subsets.
In \cite{rands}, Rands proved the following generalization of Erd\H os-Ko-Rado theorem:
given a $r \textrm{--}(n,t,\lambda)$ block design $H$ and $0<s<r$, then there exists a function $f(t,r,s)$ such that
if $H$ has an $s$-intersecting subhypergraph $H'$, then if $n>f(t,r,s)$, the number of edges in $H'$ is at most $b_s$, which is the
number of blocks through $s$ vertices. Note that Erd\H os-Ko-Rado theorem corresponds to the very special case when $H=\binom{[n]}{t}$ and $s=1$.
Moreover, when $(s, r)=(1, 2)$, this is an analogue of our Theorem \ref{main_theorem}, and when the block design is complete, it is similar to
Theorem \ref{theorem_k-tuple}. Using tools developed in the previous section, we can prove the following generalization of
Manickam-Mikl\'os-Singhi cojecture to designs. Given an $r \textrm{--}(n, t, \lambda)$ design $H$,
for $j=1, \cdots, t$, let $d_j$ be the number of blocks containing a fixed set of $j$ elements. Obviously $d_r=\lambda$, and by double counting,
$d_j = \frac{\binom{n-j}{r-j}}{\binom{t-j}{r-j}} \lambda$.
\begin{theorem}
Let $k, r, t$ be positive integers with $t \ge r \ge 2k$, $n>Ct^{3k+3}$ for sufficiently large $C$ and let
$\{w_{X}\}_{X \in \binom{[n]}{k}}$ be a weight assignment with $\sum_{X \in \binom{ [n]}{k}} w_{X} \ge 0$. Then for a given $r \textrm{--}(n, t, \lambda)$ design $H$,
 the number of blocks $B$ with $\sum_{X \subset B, X \in \binom{[n]}{k}} w_X \ge 0$ is at least $d_k=  \frac{\binom{n-k}{r-k}}{\binom{t-k}{r-k}} \lambda$.
\end{theorem}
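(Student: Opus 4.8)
The plan is to reduce to Theorem~\ref{theorem_k-tuple} by showing that, for an $r \textrm{--}(n, t, \lambda)$ design with $r\ge 2k$, the hypergraph one studies has exactly the same local structure as the complete one treated in Section~\ref{subsection_mms}, up to a single global rescaling. Let $H$ be the hypergraph with vertex set $\binom{[n]}{k}$ whose edges are the blocks of the design, and normalize the weights so that $\sum_X w_X=0$ and $w_{[k]}=1=\max_X w_X$.

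The first step is to record the counting identities that take the place of the binomial ones in Section~\ref{subsection_mms}. Since every $j$-subset with $j\le r$ lies in exactly $d_j=\frac{\binom{n-j}{r-j}}{\binom{t-j}{r-j}}\lambda$ blocks, $H$ is $d_k$-regular and the codegree of two vertices $X,X'$ with $|X\cap X'|=i$ equals $d_{2k-i}$, which is well defined precisely because $2k-i\le 2k\le r$; this is the sole place where the hypothesis $r\ge 2k$ (rather than just $r>k$) is used. More generally, for a $k$-set $L$ and integers $j\le y\le k$, inclusion--exclusion over $L\setminus X$ expresses the number $N_{j,y}$ of blocks $B$ with $|B\cap L|=y$ containing a fixed $k$-set $X$ with $|X\cap L|=j$ as a fixed alternating combination of $d_k,d_{k+1},\dots,d_{2k}$. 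The elementary fact that makes everything work is
$$\frac{d_{k+i}}{d_k}=\prod_{m=0}^{i-1}\frac{t-k-m}{n-k-m}=\frac{\binom{n-k-i}{t-k-i}}{\binom{n-k}{t-k}}\qquad(0\le i\le r-k),$$
i.e.\ the consecutive degree ratios of the design agree exactly with those of the complete $t$-uniform hypergraph on $[n]$; combined with the identity $\sum_i(-1)^i\binom{a}{i}\binom{p-i}{q-i}=\binom{p-a}{q}$ this gives $N_{j,y}=\rho\cdot\binom{k-j}{y-j}\binom{n-2k+j}{t-k-y+j}$, where $\rho:=d_k/\binom{n-k}{t-k}$; in the notation of Section~\ref{subsection_mms} this reads $N_{j,y}=\rho\,\beta_{y,j}$. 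In other words every block-count used in Section~\ref{subsection_mms} is simply multiplied by the same factor $\rho$, while every count of $k$-subsets lying inside a fixed block, and the quantities $\binom{t}{k},t,k$, are unchanged.

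Given this dictionary, the second step is to rerun the four-case analysis of Section~\ref{subsection_mms} essentially verbatim, replacing $\binom{n-j}{t-j}$ by $d_j$ throughout and the target $\binom{n-k}{t-k}$ by $d_k$. Lemma~\ref{lemma_gauss} applies unchanged: scaling both $\vec b$ and $\beta$ by $\rho$ leaves the solution $\vec\gamma$ and the bounds $0\le\gamma_y\le b_y/\beta_{y,y}$ intact, so its positivity and monotonicity hypotheses are inherited. The design analogues of Lemmas~\ref{very_negative}--\ref{lemma_last} then follow with the same arguments, since in every inequality there both sides scale linearly in $\rho$ (all error terms are of the shape $\frac{t^{O(k)}}{n}d_k$ against main terms $\frac{1}{t^{O(k)}}d_k$), so the threshold $n>Ct^{3k+3}$ is literally the same and no new constant appears. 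Splitting on whether the $t^k$-th largest weight exceeds $\Theta(t^{-2k})$, and then on whether $[k]$ lies in all but a $\frac{1}{t^k}$-fraction of its blocks, produces at least $d_k$ nonnegative blocks in each case; no equality analysis is needed because the statement only asserts the lower bound.

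The main obstacle is the bookkeeping in the previous paragraph: one must check that the proportionality $N_{j,y}=\rho\,\beta_{y,j}$ holds for \emph{every} intersection pattern appearing in the proofs of Lemmas~\ref{very_negative}--\ref{lemma_last} --- equivalently, that each alternating sum of $d_k,\dots,d_{2k}$ really does collapse to $\rho$ times a single binomial coefficient --- and that $\beta_{y,j}$ stays increasing in $j$ for $n>Ct^{3k+3}$, as required by Lemma~\ref{lemma_gauss}. Both reduce to the two displayed identities above, so the verification is routine; but it is the only point where the design structure genuinely enters, and it is where care is needed.
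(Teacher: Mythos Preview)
Your proposal is correct and matches the paper's own (only sketched) approach. The paper explicitly says that ``the only additional ingredient needed'' beyond Section~\ref{subsection_mms} is the closed-form count of blocks containing a fixed set $A$ and avoiding a fixed set $B$; your proportionality observation $d_{k+i}/d_k=\binom{n-k-i}{t-k-i}/\binom{n-k}{t-k}$, and hence $N_{j,y}=\rho\,\beta_{y,j}$, is precisely an equivalent way of packaging that fact, after which the four lemmas of Section~\ref{subsection_mms} carry over verbatim with both sides scaled by $\rho$. One small inaccuracy: you say $r\ge 2k$ is used \emph{only} for the codegree $d_{2k-i}$, but you yourself note two lines later that the inclusion--exclusion for $N_{j,y}$ requires $d_{2k}$ as well (when $j=0$); it is the same constraint, just invoked in more than one place.
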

It would be interesting if one can remove the condition $t \ge r \ge 2k$ in this statement. This will give a general result unifying our Theorems
\ref{main_theorem} and \ref{theorem_k-tuple}. The only additional ingredient needed to prove the above theorem is the following fact.
For two disjoint vertex subsets $|A|=a$ and $|B|=b$ of a $r \textrm{--}(n, t, \lambda)$ design, the number of edges containing every vertex
from $A$ while not containing
any vertex in $B$ is equal to $\frac{\binom{n-r-b}{t-r}}{\binom{n-r}{t-r}} \frac{\binom{n-a-b}{r-a}}{\binom{t-a}{r-a}} \cdot \lambda$.
We will omit any further details here and will return to this problem in the future.


In Section \ref{section_tightness}, we gave an example of infinitely many $r$-uniform $n$-vertex hypergraphs with equal
codegrees and $n\sim r^3$ not having the MMS property, based on the assumption that there are infinitely many Mersenne primes.
Since the largest known Mersenne number has more than ten million digits, our example already gives (unconditionally) a huge hypergraph
with $n$ cubic in $r$. Still it would be interesting to construct infinitely many such hypergraphs directly, without relying on the existence of Mersenne primes?

In Section \ref{section_kint}, we proved two additional generalizations of the Manickam-Mikl\'os-Singhi conjecture. Both results can be regarded as the 
analogues of the Erd\H os-Ko-Rado theorem on the $k$-intersecting families for sufficiently large $n$. It would be interesting to determining the exact range 
for which these theorems hold. For example when $k=1$, Theorem \ref{theorem_k-tuple} only gives $n>t^6$ while we know from \cite{pokrovskiy} that it is 
true already for $n$  linear in $t$.

\vspace{0.2cm}
\noindent
{\bf Acknowledgment.}
We would like to thank Ameerah Chowdhury for bringing to our attention a Manikam-Miklos-Sighi conjecture for vector spaces and for sharing with us her preprint
on this topic.

\appendix
\section{Missing proofs from Section \ref{subsection_vectormms}}
Throughout this section we use that for $a>b$, $ q^{(a-b)b} \leq {a \brack b}_q \leq q^{(a-b)b+b}$.
\vspace{0.1cm}

\noindent \textbf{Proof of Lemma \ref{very_negative_vector}}:
We may rewrite the left hand side of the inequality as
\begin{align*}
&{n-2k \brack t-2k}_q \sum_{\dim (X \cap L)=0} w_X + {n-2k+1 \brack t-2k+1}_q \sum_{\dim (X \cap L)=1} w_X + \cdots + {n-k-1 \brack t-k-1}_q \sum_{\dim (X \cap L)=k-1} w_X \\
&={n-k-1 \brack t-k-1}_q \sum_{\dim (X \cap L) \le k-1} w_X - \sum_{j=0}^{k-2} \Big( b_j \cdot \sum_{\dim (X \cap L)= j} w_X\Big).
\end{align*}
Here we let $b_j= {n-k+1 \brack t-k+1}_q-{n-2k+j \brack t-2k+j}_q$. Note that $\sum_{\dim (X \cap L) \le k-1} w_X=\sum_{X \ne L} w_X \ge -1$. Since $n>Ck(t-k)$, this implies
\begin{align} \label{cite_inequality_vector}
\sum_{j=0}^{k-2} \Big(b_j \cdot \sum_{|X \cap L| = j} w_X\Big) &\ge \frac{1}{24{t\brack k}_q^2} {n-k \brack t-k}_q - {n-k-1 \brack t-k-1}_q \ge  \frac{1}{25{t \brack k}_q^2} {n-k \brack t-k}_q.
\end{align}

For a fixed integer $0 \le y \le k-1$, denote by $D_y$ the number of nonnegative $t$-dimensional subspaces $Z$ with $\dim (Z \cap L)=y$. If $D_y > {n-k \brack t-k}_q$ then we are done. Otherwise assume $D_y \le {n-k \brack t-k}_q$ for every $y$. We estimate the following sum:
$$\sum_{\dim (Z \cap L)=y, \dim Z=t} \sum_{X \subset Z} w_X.$$
Since every nonnegative $t$-dimensional subspace contributes to the sum at most ${t \brack k}_q$, it is at most
${t \brack k}_q D_y \le {t \brack k}_q {n-k \brack t-k}_q$. By double counting, the above sum also equals
$\sum_{j=0}^y ( \beta_{y, j} \cdot \sum_{\dim (X \cap L) = j} w_X ).$ Here for a $k$-dimensional subspace $X$ with $\dim (X \cap L)=j$,
$\beta_{y, j}$ denotes the number of $t$-dimensional subspaces $Z$ such that $X \subset Z$ and $\dim (Z \cap L) = y$. There are
$\frac{(q^k-q^j) \cdots (q^k-q^{y-1})}{(q^y-q^j) \cdots (q^y-q^{y-1})}$ ways to extend $X \cap L$ to $Z \cap L$.
Let $Q= \textrm{span} \{X, Z \cap L\}$, and $R = \textrm{span} \{X, L\}$. Then $\dim Q=k+y-j$, $\dim R= 2k-j$, and $Q \subset R$.
The next step is to extend $Q$ to $Z$ such that $Z \cap R  = Q$. The number of ways is equal to
$\frac{(q^{n}-q^{2k-j}) \cdots (q^{n}-q^{t+k-y-1})}{(q^{t}-q^{k+y-j}) \cdots (q^{t}-q^{t-1})}$. Note that this is only nonzero for $j \ge k+y-t$, in this case $\beta_{y, j}$ is the product of these two expressions, which is roughly $q^{(k-y)(y-j)+(n-t)(t-k+j-y)}$. Since $t-k+j-y \geq 0$, it is increasing in $j$ for large $n$. Also note that $b_j$ is decreasing in $j$.
Let $\vec{\gamma}=(\gamma_0, \cdots, \gamma_{k-2})$ be the unique solution of the system
of equations $\vec{b} = \vec{\gamma} \cdot \beta$, then by Lemma \ref{lemma_gauss}
\begin{eqnarray*}
\sum_{j=0}^{k-2} \Big(b_j \cdot \sum_{\dim (X \cap L) = j} w_X\Big) &=&
\sum_{j=0}^{k-2} \sum_{y=j}^{k-2} \beta_{y, j} \gamma_y \sum_{\dim (X \cap L)=j} w_X=
\sum_{y=0}^{k-2} \gamma_y \cdot \sum_{j=0}^y \Big( \beta_{y, j} \cdot \sum_{\dim (X \cap L)=j} w_X \Big) \\
&\le& {t \brack k}_q {n-k \brack t-k}_q\sum_{y=0}^{k-2} \gamma_y
\leq {t \brack k}_q {n-k \brack t-k}_q\sum_{y=0}^{k-2} \frac{b_y}{\beta_{y,y}}.
\end{eqnarray*}
It is easy to check that $\beta_{y,y} \geq q^{(n-t)(t-k)}$ and so $b_y/\beta_{y,y}=({n-k-1 \brack t-k-1}_q-{n-2k+y \brack t-2k+y}_q)/q^{(n-t)(t-k)} \le {n-k-1 \brack t-k-1}_q/q^{(n-t)(t-k)}$.   Therefore
\begin{align*}
\sum_{j=0}^{k-2} \Big(b_j \cdot \sum_{|X \cap L| = j} w_X\Big) \le {t \brack k}_q {n-k \brack t-k}_q \cdot(k-1)\cdot \frac{{n-k-1 \brack t-k-1}_q}{q^{(n-t)(t-k)}} \leq (k-1){t \brack k}_q q^{t-k-(n-t)} {n-k \brack t-k}_q,
\end{align*}
that for $n>Ck(t-k)$ contradicts \eqref{cite_inequality_vector}. \qed
~\\
~\\
\medskip
\noindent \textbf{Proof of Lemma \ref{lemma_largeweight_vector}}:
We will show that every $k$-subspace whose weight is larger than $w_{P}$ is contained in at least $\frac{1}{2{t \brack k}_q} {n-k \brack t-k}_q$ nonnegative edges, otherwise
there are already more than ${n-k \brack t-k}_q$ nonnegative edges.
For simplicity we just need to prove this statement for $w_{P}$ itself. Suppose there are $S$ negative edges containing $w_{P}$, which are denoted by
$e_1, \cdots, e_{S}$ (as $t$-dimensional subspaces). And $e_{S+1}, \cdots, e_{{n-k \brack t-k}_q}$ are the other (thus nonnnegative) edges
containing $w_P$. We have
\begin{align}
\sum_{i=1}^{{n-k \brack t-k}_q} \sum_{P \ne X \subset e_i} w_{X} &= \sum_{i=1}^{S} \sum_{P \ne X \subset e_i} w_X + \sum_{i=S+1}^{{n-k \brack t-k}_q} \sum_{P \ne X \subset e_i} w_{X}\nonumber\\
&\le -w_P \cdot S + w_P \cdot \left({n-k \brack t-k}_q-S\right) \cdot \left({t \brack k}_q-1\right) + 3 {t \brack k}_q \cdot {n-k-1 \brack t-k-1}_q \label{sum_estimate_vector}
\end{align}
Here we used that there are at most $3 {t \brack k}_q$ vertices $X$ whose weight is larger than $w_P$ (but always $\le 1$), and the number of
times every such weight appear in the sum is at most ${n-k-1 \brack t-k-1}_q$. If $S \ge \Big(1-\frac{1}{2{t \brack k}_q}\Big) {n-k \brack t-k}_q$,
then the above expression is at most
\begin{align*}
&- {n-k \brack t-k}_q \left( \Big(1-\frac{1}{2{t \brack k}_q}\Big) w_P - \frac{1}{2{t \brack k}_q} \cdot {t \brack k}_q \cdot w_P - 3 {t \brack k}_q \cdot \frac{q^{t-k}-1}{q^{n-k}-1} \right),
\end{align*}
which can be further bounded by
$$- {n-k \brack t-k}_q \left( \Big(\frac{1}{2}-\frac{1}{2{t \brack k}_q}\Big)w_P -3 {t \brack k}_q \cdot \frac{q^{t-k}-1}{q^{n-k}-1}\right)
<- {n-k \brack t-k}_q \cdot \frac{1}{3}w_P \le -{n-k \brack t-k}_q \cdot \frac{1}{12{t \brack k}_q^2}.$$
The first inequality is because  $t>k \ge 2$ and $q \ge 2$, so ${t \brack k}_q \ge 7$, and also because $n>Ck(t-k)$ for large $C$. Since we also have
$\sum_{X \ne P} w_X = -w_P \ge -1.$
Lemma \ref{very_negative_vector} for $L=P$ immediately gives $>{n-k \brack t-k}_q$ nonnegative edges.

Therefore we can assume that for the $3 {t \brack k}_q$ vertices with largest weights, the number of nonnegative edges containing each such vertex is at least $\frac{1}{2 {t \brack k}_q} {n-k \brack t-k}_q$. Using the union bound, the number of nonnegative edges is at least
$$3 {t \brack k}_q \cdot \frac{1}{2 {t \brack k}_q} {n-k \brack t-k}_q   - \binom{3 {t \brack k}_q}{2} {n-k-1 \brack t-k-1}_q
\geq \frac{3}{2}\left(1-\frac{3{t \brack k}^2_q}{q^{n-t}}\right){n-k \brack t-k}_q ,$$
which is also larger than ${n-k \brack t-k}_q$ when $n >Ck(t-k)$.
\qed
~\\
~\\
\medskip
\noindent \textbf{Proof of Lemma \ref{lemma_smallweight_vector}}:
We consider all the $t$-dimensional subspaces containing $[k]$, similarly as before suppose there are $S \ge \frac{1}{2{t \brack k}_q} {n-k \brack t-k}_q$ negative edges $e_1, \cdots, e_S$ and nonnegative edges $e_{S+1}, \cdots, e_{{n-k \brack t-k}_q}$, we get
\begin{align*}
\sum_{[k] \subset Z, \dim Z=t} \sum_{X \subset Z, X \ne [k]} w_X &= \sum_{i=1}^{S} \sum_{X \subset e_i, X \ne [k]} w_X + \sum_{i=S+1}^{{n-k \brack t-k}_q} \sum_{X \subset e_i, X \ne [k]} w_X\\
&\le - S + \frac{1}{4{t \brack k}_q^2} \cdot \left({n-k \brack t-k}_q-S\right) \cdot \left({t \brack k}_q-1\right) + 3{t \brack k}_q \cdot {n-k-1 \brack t-k-1}_q\\
& \le -{n-k \brack t-k}_q\left(\frac{S}{{n-k \brack t-k}_q} - \frac{1}{4{t \brack k}_q}  - 3{t \brack k}_q \cdot \frac{q^{t-k}-1}{q^{n-k}-1}\right)\\
& \le -{n-k \brack t-k}_q\left(\frac{1}{2{t\brack k}_q} - \frac{1}{4{t \brack k}_q}- 3{t \brack k}_q  q^{-(n-t)}\right)
\end{align*}
The first inequality is by bounding the $3{t \brack k}_q$ largest weights in the second sum by $1$ and the rest by $\frac{1}{4{t \brack k}_q^2}$. It also
uses the fact that two $k$-dimensional subspaces are contained in at most ${n-k-1 \brack t-k-1}_q$ $t$-dimensional subspaces.
For $n>Ck(t-k)$, we have
$$\sum_{[k] \subset Z, \dim Z=t} \sum_{X \subset Z, X \ne [k]} w_X \le -{n-k \brack t-k}_q \cdot \frac{1}{5{t\brack k}_q}.$$
We also have $\sum_{X \ne [k]} w_X = -w_{[k]}=-1.$ Now we once again can apply Lemma \ref{very_negative_vector} for $L=[k]$ to show the existence of $>{n-k \brack t-k}_q$ nonnegative edges.
\qed
~\\
~\\
\medskip
\noindent \textbf{Proof of Lemma \ref{lemma_last_vector}}:
Note that if every $t$-dimensional subspaces containing $[k]$ is nonnegative, this already gives ${n-k \brack t-k}_q$ nonnegative edges and the lemma is proved. So we may assume that there is a negative edge $f$ (as $t$-dimensional subspace) containing $[k]$ with $\sum_{X \subset f} w_X <0$. Suppose the largest weight outside the edge $f$ is $w_Q$, where $\dim (Q \cap f) \le k-1$.
Now we define new weights $w'$, such that
\begin{align*}
w'_X=
\begin{cases}
-{t \brack k}_q   & \text{if } X \subset f \\
w_X/w_Q        & \text{otherwise.}
\end{cases}
\end{align*}
Then for every $X \not \subset f$, $w'_X \le 1$ and $w'_Q=1$. Now we consider all the ${n-k \brack t-k}_q$ $t$-dimensional subspaces containing $Q$.
As usual, assume that $S$ of them has negative sum according to $w'$. If $S \ge \Big(1-\frac{2}{3{t \brack k}_q}\Big){n-k \brack t-k}_q$, we have the
following estimate:
\begin{eqnarray*}
\sum_{Q \subset Y, \dim Y=t} \sum_{X \subset Y, X \ne Q} w'_X &\le& -S +
\left({n-k \brack t-k}_q-S \right) \cdot {t \brack k}_q \le -{n-k \brack t-k}_q \cdot \left(1-\frac{2}{3{t \brack k}_q}-\frac{2}{3}\right) \\
&\le& - \frac{1}{12}{n-k \brack t-k}_q.
\end{eqnarray*}
Note that since $\sum_{X \subset f} w_X <0$, we have
\begin{align*}
\sum_{X \ne Q} w'_X &= \sum_{X \ne Q} w_X/w_Q - \sum_{X \subset f} \left(w_X/w_Q+ {t\brack k}_q\right) \ge -1 - {t \brack k}_q^2.
\end{align*}
If we apply Lemma \ref{very_negative_vector} for $L=Q$ and the new weighting $w''_X=w'_X/(2{t\brack k}_q^2)$, we get $>{n-k \brack t-k}_q$ nonnegative edges for weight $w'$. Note that every such nonnegative edge
cannot share with $f$ a common $k$-dimensional subspace, otherwise its total weight is at most $({t \brack k}_q-1) - {t \brack k}_q < 0$. Hence these nonnegative edges are also nonnegative edges for the original weighting $w$.

By the above discussion, it remains to consider the case $S<\Big(1-\frac{2}{3{t \brack k}_q}\Big){n-k \brack t-k}_q$.
Then there are at least $\frac{2}{3{t \brack k}_q} {n-k \brack t-k}_q$ nonnegative edges containing $Q$, together
with the $\Big(1-\frac{1}{2{t \brack k}_q}\Big) {n-k \brack t-k}_q$ nonnegative edges containing $[k]$. Since $[k]$ and $w_Q$ have
codegree at most ${n-k-1 \brack t-k-1}_q \le \frac{1}{6{t \brack k}_q} {n-k \brack t-k}_q$, we have in total more than ${n-k \brack t-k}_q$ nonnegative edges.
\qed
\end{document}